\newtheorem{theorem}{Theorem}[section]
\newtheorem{lemma}[theorem]{Lemma}
\newtheorem{corollary}[theorem]{Corollary}
\newtheorem{proposition}[theorem]{Proposition}
\theoremstyle{definition}
\newtheorem{definition}[theorem]{Definition}
\newtheorem{example}[theorem]{Example}
\newtheorem{conjecture}[theorem]{Conjecture}
\theoremstyle{remark}
\newtheorem{remark}[theorem]{Remark}
\numberwithin{equation}{section}
\newcommand{\BbR}{{\mathbb R}}
\newcommand{\BbC}{{\mathbb C}}
\newcommand{\BbZ}{{\mathbb Z}}
\newcommand{\pdo}{\Psi{\rm DO}}
\newcommand{\bpdo}{\overline{\Psi{\rm DO}_0^*}}
\newcommand{\calg}{{\mathfrak g}}
\newcommand{\gl}{{\mathfrak gl}}
\newcommand{\GG}{\pdo_0^*}
\newcommand{\cklo}{c_k^{\rm lo}}
\newcommand{\ckw}{c_k^{\rm w}}
\newcommand{\tr}{{\rm tr}}
\newcommand{\bpsi}{\bar\Psi}
\newcommand{\pdoo}{\pdo_{\leq 0}}
\newcommand{\pdomo}{\pdo_{ -1}}
\newcommand{\pdomi}{\pdo_{-\infty}}
\newcommand{\pdom}{\pdo_{-\infty}}
\newcommand{\mapsnm}{{\rm Maps}^{s_0}(N,M)}
\newcommand{\mapsnmone}{{\rm Maps}(N,M)}
\newcommand{\mapsfnm}{{\rm Maps}_f(N,M)}
\newcommand{\el}{{\rm Ell}^*}
\newcommand{\gli}{{\rm GL}_\infty}
\newcommand{\opa}{{\rm Op}_1(A)}
\newcommand{\op}{{\rm Op}_1}
\newcommand{\bop}{\overline{\rm Op}_1}
\newcommand{\glh}{\gl(\mathcal H)}
\newcommand{\dvol}{{\rm dvol}}
\newcommand{\calM}{\mathcal M}
\newcommand{\calC}{\mathcal C}
\newcommand{\calA}{\mathcal A}
\newcommand{\calF}{\mathcal F}
\newcommand{\calH}{\mathcal H}
\newcommand{\calE}{\mathcal E}
\newcommand{\calG}{\mathcal G}
\newcommand{\ev}{{\rm ev}}
\newcommand{\resw}{{\rm res}^{\rm w}}
\begin{document}

\title[Characteristic Classes and $\pdo$s]{Characteristic Classes and  Zeroth Order Pseudodifferential Operators}


\author[A. Larrain-Hubach]{Andr\'es Larrain-Hubach}
\address{Department of Mathematics and Statistics, Boston University}

\email{alh@bu.edu}

\author[S. Rosenberg]{Steven Rosenberg}
\address{Department of Mathematics and Statistics, Boston University}
\email{sr@math.bu.edu}

\author[S. Scott]{Simon Scott}
\address{Department of Mathematics, King's College}
\email{simon.scott@kcl.ac.uk}

\author[F. Torres-Ardila]{Fabi\'an Torres-Ardila}
\address{Metropolitan College, Boston University}
\email{fatorres@math.bu.edu}

\subjclass[2010]{Primary: 58J40}

\begin{abstract}  We provide evidence for the conjecture
 that the Wodzicki-Chern classes vanish for all bundles with 
invertible zeroth order pseudodifferential operators $\GG$ as structure group.  The leading order Chern classes are nonzero in general, and they detect elements of the
de Rham cohomology groups $H^*(B\GG, \BbC).$
\end{abstract}

\maketitle

\centerline{\today}
\centerline{To Misha Shubin}

\section{Introduction}

Infinite dimensional manifolds $\calM$ such  as the loop space Maps$(S^1,M)$ of a manifold or more generally the space  Maps$(N,M)$  of maps between 
manifolds have interesting geometry.  The structure group of these infinite dimensional manifolds 
(i.e. of their tangent bundles $\calE$) is a gauge group of a finite rank bundle $E\to N$ over the source space.  When the manifolds have 
Riemannian metrics, the mapping spaces have Levi-Civita connections with connection and 
curvature forms taking values in $\pdo_{\leq 0} = \pdo_{\leq 0}(E)$, the algebra of nonpositive integer order classical pseudodifferential operators ($\pdo$s) acting
 on sections of $E$.    Thus for geometric purposes, the structure group should be enlarged to
 $\GG$, the group of invertible zeroth order classical $\pdo$s, since at least formally Lie$(\GG) = \pdo_{\leq 0}.$

 As discussed in \cite{L-P}, \cite{MRT}, \cite{P-R2}, the generalizations of Chern-Weil theory to $\GG$-bundles 
 are classified by the set of traces on $\pdo_{\leq 0}$, i.e. by the Hochschild cohomology group
 $HH^0(\pdo_{\leq 0}, \BbC).$  Indeed, given such a trace $T:\pdo_{\leq 0}\to \BbC$, one defines $T$-Chern classes of a connection with curvature $\Omega\in\Lambda^2(\calM, \pdo_{\leq 0})$ by the de Rham class
 $c_k^T(\calE) = [T(\Omega^k)]\in H^{2k}(\calM, \BbC).$  
 
 These traces roughly break into two classes:  the Wodzicki residue and the integral of the zeroth/leading  order symbol
 over the unit cosphere bundle.  In \S2, we prove that the Wodzicki-Chern classes vanish if 
 the structure group reduces from $\GG$ to the subgroup with leading order symbol given by the identity.  
We conjecture that the Wodzicki-Chern classes always vanish, and sketch a possible 
superconnection proof.
 These vanishing results, which were previously known only for loop spaces, reinforce the importance of the nontrivial Wodzicki-Chern-Simons classes produced in \cite{MRT}.  
 
 In the beginning of \S3,  we discuss the analytic and topological issues involved with  universal bundle calculations of Chern classes associated to the leading order symbol trace.  
 The main issue is that the classifying space $B\pdo_0^*$ may not be a manifold, so we want to 
 extend the leading order symbol trace to an algebra whose corresponding classifying space is clearly a manifold.   
 \S3 is devoted to the analytic issue of extending the leading order symbol trace to a Lie algebra containing $\pdo_{\leq 0}$.  It is easier to work with the quotient of $\pdo_{\leq 0}$ by smoothing operators, and in Proposition \ref{prop extend} we find an extension  (really a factorization)
 of the leading order symbol trace to a
 relatively large subalgebra ${\mathcal Q}$ inside a quotient 
 of the set of all bounded operators on a Hilbert space.  These results may not be optimal, so this section should be considered work in progress.
 
 Unfortunately, the classifying space $BQ$ associated to $\mathcal Q$ may not be a manifold, so we are unable to construct universal geometric characteristic classes.  In \S4, we take a smaller
 extension of the leading order symbol trace with corresponding classifying space a manifold.  We then
show that the leading order Chern classes of gauge bundles are nontrivial in general.
 This implies that there is a topological theory of characteristic classes of $\GG$-bundles involving 
the cohomology of $B\GG$.  As a first step toward understanding this cohomology,
we use the nonvanishing of leading order Chern classes on mapping spaces
to show in Theorem \ref{last theorem} that for $E^\ell\to N$,
 $H^*(B\GG, \BbC)$ surjects onto 
the polynomial algebra $H^*(BU(\ell), \BbC) = \BbC[c_1(EU(\ell)),\ldots,c_\ell(EU(\ell))]$.
This complements Rochon's work \cite{R} on the 
 homotopy groups of a certain stablilization of $\GG$.  The proof shows that $H^*(B\calG,\BbC)$ also surjects onto $H^*(BU(\ell),\BbC)$, where $\calG$ is the gauge group of $E$. 
 For comparison,
 $H^*(B\calG_0,\BbC)$, where $\calG_0$ is the group of based gauge transformations, has been completely determined by different methods, and $H^*(B\calG,\BbC)$ is known if the center of the underlying finite dimensional Lie group is finite
\cite[p.~181]{dk}.

As much as possible, we sidestep the trickier analytic and topological aspects of $B\GG$
by working with
de Rham cohomology only.  The questions of whether $\GG$ is a tame Fr\'echet
space \cite{H}, \cite{Omori} and so has a good exponential map, the relationships among
the cohomology with respect to the norm topology, the Fr\'echet topology and intermediate Banach norm topologies, and whether the de Rham 
theorem holds for $B\GG$  \cite{beggs} are not addressed.
 
 The role of $\GG$ in infinite dimensional geometry was explained to us by Sylvie Paycha, and we gratefully acknowledge our many conversations with her.  We also would like to thank Varghese 
 Mathai for suggesting we consider the closure of $\GG$ discussed below.   
The referee both pointed out serious errors and gave valuable 
suggestions for 
simplifying and clarifying this paper, which we gratefully acknowledge.  

Finally,  this paper is in many ways inspired by the seminal text \cite{shubin} of Misha Shubin, whose clear writing has made a difficult subject accessible to so many mathematicians.

\section{Vanishing of Wodzicki-Chern classes of $\pdo$-bundles}

Let $\GG = \GG(E)$ be the group of  zeroth order invertible classical $\pdo$s acting on sections of  a fixed finite rank complex bundle $E\to N.$
We recall the setup for $\GG$-bundles. Fix a complete Riemannian
metric on $N$ and a Hermitian metric on $E$.  For a real parameter $s_0 \gg 0$, 
let $H^{s_0}\Gamma(E) = H^{s_0}\Gamma(E\to N)$ be the sections of $E$ of Sobolev class $s_0.$  This space depends on the
Riemannian metric if $N$ is noncompact, and of course can be defined via local charts without choosing a metric.  Let $\calE$ be a Banach bundle over a base $B$ such that locally $\calE|_U \simeq U\times H^{s_0}\Gamma(E)$ and such that the transition 
functions lie in $\GG(E).$  Then we call $\calE$ a $\GG$- or $\GG(E)$-bundle over $B$.

The role of $s_0$ is not very important.  We could take the $C^\infty$ Fr\'echet topology on the sections
of $E$, since this is a tame Fr\'echet space in the sense of \cite{H}.

As explained in \cite{eells}, the tangent bundle to  Maps${}^{s_0}(N,M)$, the space of $H^{s_0}$ maps between manifolds $N, M$, is a 
 $\GG$-bundles.  
Fix a component Maps${}^{s_0}_f(N,M)$ of a map $f:M\to N$.  We can take $H^{s_0}\Gamma(f^*TM \to N)$ as the tangent space $T_f{\rm Maps}^{s_0}(N,M) = 
T_f{\rm Maps}_f^{s_0}(M,N).$  Exponentiating sufficiently short 
sections \\
$X\in H^{s_0}\Gamma(f^*TM)$ via $n\mapsto \exp_{f(n),M} X_n$ gives a coordinate neighborhood
of $f$ in\\
 Maps${}^{s_0}(N,M)$, making Maps${}^{s_0}(N,M)$ into a Banach manifold.  The transition maps for
$T{\rm Maps}^{s_0}(N,M)$
 for nearby maps are of the form $d\exp_{f_1}\circ d\exp_{f}^{-1}$, 
which are easily seen to be isomorphic to gauge transformations of $f^*TM.$  Since gauge 
transformations are invertible multiplication operators, $T{\rm Maps}^{s_0}(N,M)$ is a $\GG$-bundle, although at this point there is no need to pass from gauge bundles to $\GG$-bundles.
  Note that the gauge group depends on the component of $f$. 
In particular, for the loop space $LM = {\rm Maps}^{s_0}(S^1,M)$, each complexified tangent space $T_\gamma LM$ is $H^{s_0}\Gamma(S^1\times \BbC^n\to S^1)$ for $M^n$ oriented.  For convenience, we 
will always complexify tangent bundles.

\begin{remark} These bundles fit into the framework of the families index theorem. Start with  a fibration of manifolds with an auxiliary  bundle
$$\begin{CD}  @. E\\
@.@VVV\\ 
Z@>>> M\\
@.@VV{\pi}V\\
@. B
\end{CD}
$$
Here $M$ and $B$ are manifolds, and the fiber is modeled on a compact manifold $Z$.  The 
structure group of the fibration is Diff$(Z)$.  
Pushing down the sheaf of sections of $E$ via $\pi$ 
gives an infinite rank bundle
$$\begin{CD} H^{s_0}\Gamma(E_b) @>>> \calE = \pi_*E\\
@.@VVV\\
@. B
\end{CD}
$$
with the fiber modeled on the $H^{s_0}$ sections of $E_b = E|_{Z_b}$ over $Z_b = \pi^{-1}(b)$ for 
one $b$ in each component of $B$.   The structure 
group of $\calE$ is now a semidirect product $\calG\ltimes {\rm Diff}(Z)$, where $\calG$ is the gauge group of $E_b.$
In particular, for $\ev:N\times {\rm Maps}^{s_0}(N,M) \to M, \ev(n,f) = f(n)$, and
$$\begin{CD} @. E = \ev^*TM @>>> TM\\
@. @VVV @VVV\\
N@>>> N\times {\rm Maps}^{s_0}(N,M) @>{\ev}>> M\\
@.@VV{\pi = \pi_2}V@.\\
@. {\rm Maps}^{s_0}(N,M) @.
\end{CD}
$$
we get $\calE = \pi_*\ev^*TM = T{\rm Maps}^{s_0}(N,M).$  Since the fibration is trivial, the structure group is just the gauge group.  Defining characteristic classes for nontrivial fibrations is open at present.
\end{remark}

As explained in the introduction, any trace on the gauge group $\calG$ of $E\to N$ will give a Chern-Weil
theory of characteristic classes on $\calE.$  Such a trace will be used in \S3.  However, 
using a wider class of traces is natural, as we now explain. The
choice of Riemannian metrics on $N, M$ leads to a family of Riemannian metrics on Maps$(N,M).$  Namely, pick $s \gg 0, s \leq s_0$.  For $X, Y \in T_f{\rm Maps}^{s_0}(N,M)$, set
\begin{equation}\label{one}
\langle X, Y\rangle_{f,s} = \int_N \langle X_n, (1+\Delta)^{s}Y_n\rangle_f(n)\ \dvol(n),
\end{equation}
where $\Delta = (\ev^*\nabla)^*(\ev^*\nabla)$ and $\nabla = \nabla^M$ is the Levi-Civita connection on $M$. Here we assume $N$ is compact.  For example, when $N = S^1$, 
$\ev^*\nabla$ is covariant differentiation along the loop $f$.  Equivalently, we are taking the 
$L^2$ inner product of the pointwise $H^{s}$ norms of $X$ and $Y$.  

The metric (\ref{one}) gives rise to a Levi-Civita connection $\nabla^{s}$ by the Koszul
formula
\begin{eqnarray}\label{two} 
\langle\nabla^s_YX,Z\rangle_s &=&\ X\langle Y,Z\rangle_s +Y\langle X,Z\rangle_s
-Z\langle X,Y\rangle_s\\
&&\qquad 
+\langle [X,Y],Z\rangle_s+\langle [Z,X],Y\rangle_s -\langle[Y,Z],X\rangle_s.\nonumber
\end{eqnarray}
provided the right hand side is a continuous linear functional of $Z\in$\\
$T_f{\rm Maps}^{s_0}(N,M).$  As explained in \cite{MRT}, the only problematic
term $Z\langle X,Y\rangle_s$ is 
continuous in $Z$ for $s\in \BbZ^+$, but this probably fails otherwise.  

Restricting ourselves to $s\in \BbZ^+$, we find  that the
connection one-form and curvature two-form of $\nabla^s$ take values in $\pdo_{\leq 0}(\ev^*TM).$ (This is \cite[Thm. 2.1, Prop. 2.3]{MRT} for $LM$, and the proof generalizes.)  Because these natural
connections do not take values in $\Gamma {\rm End}(E,E)$, the Lie algebra of the gauge group Aut$(E)$,
we have to extend the structure group of $T{\rm Maps}^{s_0}(N,M)$ to $\GG.$  Note that $\GG$ acts as bounded operators on $T_f{\rm Maps}^{s_0}(N,M)$ for all choices of $s_0$,
so the structure group is independent of this choice.
The
zeroth order parts of the connection and curvature forms
are just the connection and curvature forms of $\ev^*\nabla^M_{f(n)}$, 
so only the negative order parts contain new information.  

To extract the new information, we pick the unique trace on $\pdo_{\leq 0}(E\to N)$ that detects negative order terms, namely the Wodzicki residue 
\begin{equation}\label{three} \resw:\pdo_{\leq 0}\to\BbC,\ \resw(A) = (2\pi)^{-n}\int_{S^*N}
 \tr\ \sigma_{-n}(A)(x, \xi)
\ d\xi \dvol(x),
\end{equation}
where $S^*N$ is the unit cosphere bundle of $N^n$.  

We now pass to general $\GG$-bundles $\calE\to \calM$ which admit connections; for example, if $\calM$ admits a partition of
unity, then $\calE\to\calM$ possesses connections.
Standard Chern-Weil theory extends to 
justify the following definition.

\begin{definition}  The k${}^{\rm th}$ Wodzicki-Chern class $\ckw(\calE)$ of the $\GG$-bundle $\calE\to \calM$ admitting a connection $\nabla$ is 
the de Rham cohomology class $[\resw(\Omega^k)]$, where $\Omega$ is the curvature of $\nabla.$
\end{definition}

As for $\mapsnm$, the Wodzicki-Chern classes are independent of the choice of Sobolev parameter $s_0$.
These classes easily vanish for $\GG$-bundles such as $T{\rm Maps}^{s_0}(N,M)$ which restrict to gauge bundles.  
For such bundles admit a connection taking values in the Lie algebra $\Gamma
{\rm End}(E,E)$ of the gauge group.  The curvature two-form is thus a multiplication operator with vanishing
Wodzicki residue.  Since the Wodzicki-Chern class is independent of connection, these classes 
vanish.

We give a subclass of $\GG$-bundles for which the Wodzicki-Chern classes vanish. Recall that 
paracompact Hilbert 
manifolds admit partitions of unity.  Since  $\mapsnm$ is a Hilbert 
manifold and a metrizable
space for $M, N$ closed, it is paracompact and so admits partitions of unity.   Moreover, by a theorem of Milnor, $\mapsnm$
 has the
homotopy type of a  CW complex in the compact-open topology.  This carries over to the Sobolev topology by e.g. putting a connection $\nabla$ 
on $M$, and using the heat operator associated to $\nabla^* \nabla$  to 
homotop  continuous maps to smooth maps.

\begin{theorem}  
 Let $\el\subset \GG$ be the subgroup of invertible zeroth order elliptic operators whose leading order symbol is the identity.  Assume $\calM$ is a manifold homotopy equivalent to a CW complex and admitting a cover with a subordinate partition of unity. If $\calE\to\calM$ is an $\el$-bundle, then the Wodzicki-Chern classes $\ckw(\calE)$ are zero.

\end{theorem}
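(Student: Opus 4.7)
The strategy is to build a connection on $\calE$ whose local connection and curvature forms take values in $\pdo_{\leq -1}$, the Lie algebra of $\el$, and then to exploit the order-counting property of the Wodzicki residue. Using a partition of unity $\{\rho_\alpha\}$ subordinate to a cover $\{U_\alpha\}$ trivializing $\calE$ as an $\el$-bundle, I build a connection $\nabla$ with local 1-forms $\omega_\alpha = \sum_\beta \rho_\beta\, g_{\alpha\beta}^{-1}dg_{\alpha\beta}$; since the transition functions $g_{\alpha\beta} = I + A_{\alpha\beta}$ lie in $\el$ with $A_{\alpha\beta}$ of order $\leq -1$, each $g_{\alpha\beta}^{-1}dg_{\alpha\beta}$ also has order $\leq -1$, so $\omega_\alpha \in \Lambda^1(U_\alpha, \pdo_{\leq -1})$. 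The curvature $\Omega_\alpha = d\omega_\alpha + \omega_\alpha \wedge \omega_\alpha$ consequently takes values in $\pdo_{\leq -1}$ as well.

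For $k > n := \dim N$ the $k$-fold composition $\Omega^k$ has operator order $\leq -k < -n$, so the symbol $\sigma_{-n}(\Omega^k)$ vanishes pointwise on the cosphere bundle $S^*N$. Hence $\resw(\Omega^k) = 0$ pointwise on $\calM$ and $\ckw(\calE) = 0$.

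For $k \leq n$ the order count alone is insufficient; one must show that the Chern-Weil representative $\resw(\Omega^k)$ is exact rather than pointwise zero. The crucial observation is $\sigma_{-1}(\Omega) = d\sigma_{-1}(\omega)$, valid because $\omega \wedge \omega$ contributes only at order $\leq -2$, so the leading symbol of the curvature is a total $\calM$-differential. For the loop-space case $(n,k) = (1,1)$ this directly yields $\resw(\Omega) = d\bigl[(2\pi)^{-1}\int_{S^*N} \tr\, \sigma_{-1}(\omega)\bigr]$ locally, and a Cech--de Rham patching using $\{\rho_\alpha\}$ glues the local primitives into a global one. For general $n$ and $1 \leq k \leq n$, I would attempt an iterative refinement: given a connection with curvature in $\pdo_{\leq -j}$, add a globally defined correction $\eta \in \Lambda^1(\calM, {\rm ad}\,\calE)$ of order $\leq -j$ so that the new curvature lies in $\pdo_{\leq -j-1}$ modulo an exact form on $\calM$. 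After at most $n$ refinements, the curvature has sufficiently deep negative order that the previous order-count argument applies.

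The main obstacle lies in making this iterative refinement precise. The leading symbol $\sigma_{-j}(\omega)$ is only defined locally and transforms affinely under gauge changes (behaving like a connection on the canonically trivial associated graded bundle ${\rm gr}_{-j}({\rm ad}\,\calE)$) rather than tensorially, so any lift to a global 1-form in ${\rm ad}\,\calE$ must be assembled via a partition of unity, and that gluing introduces commutator and derivative error terms of the same order. Showing that these errors contribute only exact forms to $\resw(\Omega^k)$, using the Bianchi identity $d\Omega + [\omega, \Omega] = 0$ together with the trace property $\resw([A, B]) = 0$, is the heart of the argument; the superconnection construction alluded to in the introduction may be the cleanest systematic framework.
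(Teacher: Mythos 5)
Your argument only settles the easy range $k > \dim N$, and the theorem's real content lies in $1 \le k \le \dim N$, which you leave open. The partition-of-unity connection indeed has $\omega_\alpha,\Omega_\alpha$ of order $\le -1$, and for $k>\dim N$ the order count kills $\sigma_{-\dim N}(\Omega^k)$; that part is fine. But for $k\le \dim N$ your ``iterative refinement'' is only a plan, and the obstacle you yourself identify is genuine: $\sigma_{-j}(\omega)$ is not a global object, the corrections you would add are of the same order as the terms you are trying to remove, and even in your $(n,k)=(1,1)$ example the local primitives $\int_{S^*N}\tr\,\sigma_{-1}(\omega_\alpha)$ differ on overlaps by closed but not obviously exact forms (essentially $\resw(g_{\alpha\beta}^{-1}dg_{\alpha\beta})$ plus commutator terms), so the \v{C}ech--de Rham patching does not go through automatically. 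As written, the proof establishes exactness of $\resw(\Omega^k)$ only where it is trivially pointwise zero.

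The paper avoids this analytic bookkeeping entirely by a topological reduction: by Booss--Wojciechowski (Prop.\ 15.4), maps of spheres and balls into $\el$ deform into $\gli$, so $(Bi)_*:\pi_k(B\gli)\to\pi_k(B\el)$ is an isomorphism; since $\calM$ is homotopy equivalent to a CW complex, $[\calM,B\gli]=[\calM,B\el]$ and every $\el$-bundle reduces to a $\gli$-bundle, indeed to a $\Psi_{-\infty}^*$-bundle (invertible operators $I+P$ with $P$ smoothing). One then chooses a connection with one-form valued in smoothing operators, so $\Omega^k$ is smoothing and $\resw(\Omega^k)$ vanishes identically as a form, for every $k$. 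If you want to salvage your direct Chern--Weil route, you would need to prove the exactness of the residue forms for all $k\le\dim N$ --- which is essentially the superconnection/transgression program the paper only sketches for its stronger conjecture --- or else import a structure-group reduction of the above type, at which point you have reproduced the paper's proof.
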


\begin{proof}
 By \cite[Prop. 15.4]{bw}, for $V = S^n$ or $B^n$, a continuous map $f:V\to \el$  is homotopic
within $\el$ to a map $g:V\to \gli$, the set of operators of the form $I + P$, where $P$ is a 
finite rank operator.
For $V = S^n$, we get that the inclusion $i:\gli\to\el$ induces a surjection 
$i_*:\pi_k(\gli)\to \pi_k(\el)$ on all homotopy groups, and for $V = B^n$ we get that $i_*$ is 
injective.  From the diagram
$$\begin{CD}
 @>>> \pi_k(\gli) @>>> \pi_k(E\gli)@>>> \pi_k(B\gli)@>>>\\
@. @Vi_* VV     @V(Ei)_* VV  @V(Bi)_*VV\\
@>>> \pi_k(\el) @>>> \pi_k(E\el)@>>> \pi_k(B\el)@>>>
\end{CD}$$
we get $(Bi)_*:\pi_k(B\gli)\to \pi_k(B\el)$ is an isomorphism for all $k$. 
These classifying spaces are weakly equivalent to CW complexes \cite[Thm. 7.8.1]{spanier}. 
This implies that $[X, B\gli] = [X,B\el]$ for any CW complex $X$.  In particular, any $\el$-bundle reduces to a $\gli$-bundle.

Thus $\calE\to\calM$ admits a $\gli$-connection.
In fact, the proof of \cite[Prop. 15.4]{bw} implicitly 
shows that $\gli$ is homotopy equivalent to $\Psi_{-\infty}^*$, the group of invertible $\pdo$s of the form $I + P$, where $P$ is a finite rank operator given by a smooth kernel.  Thus we may assume that the connection one-form $\theta$ takes values in
Lie$(\Psi_{-\infty}^*)$, the space of smoothing operators. 
 The curvature two-form is given locally by
$\Omega_\alpha = d\theta_\alpha +\theta_\alpha\wedge \theta_\alpha$, and hence
$\Omega^k$  also takes values in smoothing operators. 
 The Wodzicki residue of $\Omega^k$ therefore vanishes, 
so $\ckw(\calE) = 0.$

\end{proof}

Based on this result and calculations, the following conjecture seems plausible.

\begin{conjecture}  The Wodzicki-Chern classes vanish on any $\GG$-bundle over a base manifold
admitting a partition of unity.
\end{conjecture}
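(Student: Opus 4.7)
The plan is to implement the superconnection strategy alluded to just after the conjecture, by rewriting the Wodzicki residue via zeta-function regularization. Given a $\GG$-bundle $\calE\to\calM$ with connection $\nabla$ and curvature $\Omega\in\Lambda^2(\calM,\pdoo)$, choose a positive self-adjoint elliptic operator $Q$ of positive order $m$ acting on sections of the model bundle $E\to N$, and form the family of de Rham forms
$$\omega_s \;=\; \tr(\Omega^k\, Q^{-s}) \;\in\; \Lambda^{2k}(\calM,\BbC),\qquad \mathrm{Re}(s)\gg 0,$$
using the ordinary operator trace, which converges for large $\mathrm{Re}(s)$ because $\Omega^k Q^{-s}$ is then trace class fiberwise. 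The Bianchi identity together with the cyclic property of $\tr$ makes $\omega_s$ closed, and Wodzicki's identification of the residue gives
$$\resw(\Omega^k) \;=\; m\cdot\mathrm{Res}_{s=0}\,\omega_s,$$
so it suffices to prove that this residue is exact as a de Rham form for every $k$.

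The superconnection formalism of Quillen and Bismut should supply a holomorphic representative. Form the superconnection $\mathbb{A}_t = t^{1/(2m)} Q^{1/2} + \nabla$ on the Hilbert bundle underlying $\calE$; then $\mathrm{STr}(e^{-\mathbb{A}_t^2})$ is closed and its cohomology class is independent of $t$. A Mellin transform in $t$ applied to the appropriate component of $\mathrm{STr}(e^{-\mathbb{A}_t^2})$ should produce a meromorphic family of closed forms $\widetilde{\omega}_s$ whose principal part at $s=0$ differs from that of $\omega_s$ only by an exact form arising from a standard transgression in the parameter $t$. If, as one expects generically, $\widetilde{\omega}_s$ is holomorphic at $s=0$, then $\resw(\Omega^k)$ is cohomologous to zero and the conjecture follows.

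The hard part will be the analytic foundations. The auxiliary $Q$ is fixed on the model bundle $E\to N$ and is not chosen as a family compatible with $\nabla$, so the commutator $[\nabla,Q]$ generally fails to be smoothing; consequently $\mathbb{A}_t^2$ is not a genuine Laplace-type operator, and the requisite heat kernel asymptotics for $e^{-\mathbb{A}_t^2}$, the meromorphic continuation of $\widetilde{\omega}_s$, and the holomorphy at $s=0$ all demand delicate symbolic and trace estimates that go beyond standard superconnection theory. A natural route is to first handle the case where $\calE$ arises from a fibration as in the Remark, so that a genuine compatible family of elliptic operators is available, and then to approximate general $\GG$-bundles by this subclass via a universality argument in the spirit of the proof of the preceding theorem, replacing $\gli$ there by an appropriately constructed contractible subgroup through which the classifying map must factor up to homotopy.
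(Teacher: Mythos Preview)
This statement is a \emph{conjecture} in the paper, not a theorem; the paper does not claim to prove it.  What follows the conjecture is explicitly an ``outline [of] a putative proof \ldots\ details will appear elsewhere,'' restricted moreover to the trivial-fibration setting of Remark~2.1.  So both the paper and your proposal are sketches, not proofs, and should be read as such.

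That said, your sketch contains a concrete error that the paper's sketch avoids.  You assert that $\omega_s = \tr(\Omega^k Q^{-s})$ is closed by Bianchi plus cyclicity.  It is not.  Locally $d\tr(\Omega^k Q^{-s}) = \tr([\Omega^k,\theta]\,Q^{-s}) = \tr(\Omega^k[\theta,Q^{-s}])$, and since you have chosen $Q$ as a fixed operator on the model fiber rather than a covariantly constant family, $[\theta,Q^{-s}]$ is a nonzero $\pdo$ of order $-ms-1$; the resulting trace has no reason to vanish.  You later acknowledge that $[\nabla,Q]$ is not smoothing, but the earlier closedness claim already fails at that point, and with it the identification of $\mathrm{Res}_{s=0}\,\omega_s$ as a closed form representing $\resw(\Omega^k)$ in cohomology.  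The paper's sketch sidesteps this by working with a genuine graded superconnection $B_t = \nabla + t^{1/2}A$ built from an odd family $A$ intertwining $\calE^0$ and an auxiliary $\calE^1$, so that the heat supertrace ${\rm str}(e^{-B_t^2})$ is closed by the standard transgression machinery; the $t\to 0$ and $t\to\infty$ limits then give $\resw(e^{-R})$ and zero respectively.  Your $\mathbb{A}_t = t^{1/(2m)}Q^{1/2} + \nabla$ has no $\BbZ_2$-grading and no odd part, so the supertrace formalism does not apply.

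Your final paragraph concedes that the analytic foundations are missing and proposes retreating to the fibration case and then approximating general $\GG$-bundles by a universality argument.  But the fibration case is already known to vanish for elementary reasons (such bundles admit gauge connections, whose curvature is a multiplication operator with zero Wodzicki residue), so no superconnection is needed there; and the universality step---factoring an arbitrary $\GG$-classifying map through a contractible subgroup---is precisely the content of the conjecture and is not supplied.  In short, your proposal does not advance beyond the paper's own acknowledged incompleteness, and introduces an additional gap at the closedness step.
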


We will see in \S3 that $\el$ is not a deformation retraction of $\GG$ in general, so 
the conjecture does not follow from the previous theorem.

We now outline a putative  proof of the conjecture based on 
the families index theorem setup for 
trivial fibrations as in Remark 2.1; details will appear elsewhere. 
As noted above, the Wodzicki-Chern classes vanish for these gauge bundles, but the
superconnection techniques given below may generalize to other, perhaps all, $\GG$-bundles.

Let  $\nabla = \nabla^0 \oplus \nabla^1$ be a graded connection on a graded infinite dimensional
 bundle $\calE   = \calE^0 \oplus  \calE^1$  over a base space $B$. Let $R   = R^0 \oplus  R^1$ be the corresponding curvature form.  The connection and curvature forms take values in $\pdo$s of
 nonpositive order.

We choose a smooth family of nonpositive order $\pdo$s
$ a : \calE^0 \rightarrow \calE^1$, 
and set
$$ A : \calE \rightarrow \calE, \ A = \left(\begin{array}{cc} 0&a^*\\a&0\end{array}\right).$$
We form the superconnection
$B_t = \nabla + t^{1/2}A.$
For convenience,  assume that $A$ has constant order zero. Then the heat operator
$\exp(-B_t^2)$
is a smooth family of zero order $\pdo$s, as seen from an analysis of its asymptotic expansion.

The standard transgression formula for the local families index theorem is of the form
${\rm str}(\exp(-B_{t_1}^2)) - {\rm str}
(\exp(-B_{t_2}^2)) = d\int_{t_1}^{t_2} \alpha_t dt$,
where str is the supertrace
and $\int\alpha_t$ is an explicit Chern-Simons form \cite{B2}.  In the $t\to\infty$ limit, the connection becomes a connection on the finite rank index bundle, and defines there a smoothing operator on which the Wodzicki residue trace is zero. As $t \to  0$, the Wodzicki residue of $e^{-B_t^2}$ approaches 
the residue of $e^{-R} $.
This limit exists because we are using the Wodzicki residue and not the classical trace, as again follows from an analysis of the symbol asymptotics. This shows that the Wodzicki-Chern character
and hence the $\ckw(\calE)$  vanish 
in cohomology.  

We can manipulate the
choice of $A$ and the bundle $\calE^1$ to infer the result  for a non-graded bundle. That is, we take $\calE^0, \nabla^0, R^0$ to be a fixed $\pdo_0^*$-bundle with connection
and curvature form, and take $\calE^1$ 
to be a trivial bundle with a flat connection $\nabla^1$. 
 Choose $a$ to be an elliptic family of $\pdo$s of order zero parametrized by $B$. 
  Then the graded Wodzicki-Chern character reduces to the Wodzicki-Chern character of $E^0$, and we are done.
 
 It may be that a refined version of this argument gives the vanishing of the Wodzicki-Chern 
 character as a differential form, in which the Wodzicki-Chern-Simons classes of \cite{MRT}
 would always be defined.

\section{Extending the leading order symbol trace}

In contrast to the Wodzicki-Chern classes, the leading order Chern classes are often nonzero, and 
we can use them to detect elements of $H^*(B\GG,\BbC).$  As above, let $\calE$ be a $\GG$-bundle with fiber modeled on $H^{s_0}\Gamma(E\to N).$

Throughout this section, we use the following conventions:  (i) the manifold $N$ is closed and
connected; (ii) all cohomology groups $H^*(X,\BbC)$ are de Rham cohomology; (iii) 
Maps$(N,M)$ denotes $\mapsnm$ for a fixed large Sobolev parameter $s_0$; (iv) smooth
sections of a bundle $F$ are denoted by $\Gamma F.$

\begin{definition}  The k${}^{\rm th}$ leading order Chern class $\cklo(\calE)$ of the $\GG$-bundle $\calE\to \calM$ admitting a connection $\nabla$ is 
the de Rham cohomology class of
$$\int_{S^*N} \tr\ \sigma_0(\Omega^k)(n,\xi)\  d\xi\ \dvol(n),$$
where $\Omega$ is the curvature of $\nabla.$ 
\end{definition}

The point is that the {\it leading order symbol trace} $\int_{S^*N} \tr\ \sigma_0:\pdo_{\leq 0}\to\BbC$ is a trace on this subalgebra, although it does not extend to a trace on all $\pdo$s.

An obvious approach to calculating leading order classes would be to find a universal connection on $E\GG\to B\GG.$  However,
it seems difficult to build a model of $B\GG$ more concrete than the general Milnor construction.  In particular, it is not clear that $B\GG$ is a manifold, so the existence of a connection on $E\GG$ may be moot. 

Alternatively,
since elements of $\GG$ are bounded operators on the Hilbert space $\calH = H^{s_0}\Gamma(E\to N)$, we can 
let $\bar\Psi = \bpdo$ be the closure of $\pdo_0^*$ in $GL(\mathcal H)$ in the norm topology. 
 $\bpsi$ 
acts freely on the contractible space 
$GL(\calH)$, so $E\bpsi = GL(\calH)$ and $B\bpsi = E\bpsi/\bpsi$.
 $GL( \calH)$ is a 
Banach manifold, and since the Frobenius theorem holds in this context, $B\bpsi$ is also a Banach 
manifold \cite{Omori}.  In particular, $E\bpsi\to B\bpsi$ admits a connection.  (It would be interesting to know if $E\bpsi$ 
has a universal connection.)  Unfortunately, it is not clear that the leading order symbol trace extends to ${\rm Lie}(\bar\Psi)$, so defining leading order symbol classes for $\bar\Psi$-bundles is problematic. 

We separate these problems into two issues.  The first strictly analytic issue
is to find a large subalgebra of $\glh$ with an extension of the leading order symbol trace.  Our solution in Proposition \ref{prop extend}
in fact acts on a quotient algebra of a subalgebra of $\glh$.   
This leads
to 
a different version of $\bar\Psi$ such that $\bar\Psi$-bundles with connection have 
a good theory of characteristic classes (see Definition \ref{def}).  However, the existence
of a universal $\bar\Psi$-bundle with connection is unclear, so we cannot use this theory to 
detect elements in $H^*(B\pdo_0^*,\BbC).$

The second issue is to find a Lie algebra $\calg$ such that $\pdoo$ surjects onto 
$\calg$ and such that the corresponding classifying space $BG$ is a manifold.  In fact,
we can reinterpret well known results to show that $\calg = H^{s_0}\Gamma{\rm End}(\pi^*E)$ works for $E\stackrel{\pi}{\to} N$.  
Since the leading order symbol trace extends to $\calg$, we can
define characteristic classes of $\pdo^*_0$-bundles to be pullbacks of the leading order
symbol classes of $EG\to BG$ (Definition \ref{def big}).  This approach allows us to 
detect  elements in $H^*(B\pdo_0^*,\BbC)$ (Theorem \ref{last theorem}).

In this section, we discuss analytic questions related to extensions of the leading order symbol trace.  In \S4, we discuss the 
topological questions related to the second issue.
\medskip

To begin the analysis of the first issue, we
first check that at the Lie algebra level, $\pdoo$ embeds continuously in $\gl(\calH)$, a result 
probably already known.
For a fixed choice of a finite precompact cover $\{U_\ell\}$ of $N$ and a subordinate partition of unity $\{\phi_\ell\},$ 
we write $A\in \pdoo$ as 
$A = A^1 + A^0,$ where $A^1 =
\sum_{j,k}' \phi_jA\phi_k$, with the sum over  $j, k$ with ${\rm supp}
\ \phi_j\cap
{\rm supp}\ \phi_k \neq \emptyset$, and $A^0 = A-A^1.$
Then $A^1$ is properly supported and has the classical local symbol 
$\sigma(\phi_jA\phi_k)$ in $U_j$,
and $A^0$ has a smooth kernel $k(x,y)$ \cite[Prop.~18.1.22]{hor}.  
The Fr\'echet topology on the classical
$\pdo$s of nonpositive integer order is given locally by the family of seminorms 
$$   
\sup_{x,\xi} |\partial_x^\beta\partial_\xi^\alpha\sigma(\phi_jA\phi_k)
(x,\xi)|(1+|\xi|)^{|\alpha|},  
$$ 
\begin{equation}\label{frechet}\sup_{x, |\xi|=1}|\partial_x^\beta\partial_\xi^\alpha \sigma_{-m}(\phi_jA\phi_k)(x,\xi)|,
\end{equation}
$$\sup_{x,\xi} |\partial_x^\beta
\partial_\xi^\alpha(\sigma(\phi_jA\phi_k)(x,\xi) - \psi(\xi)\sum_{m=0}^{T-1}\sigma_{-m}(\phi_j 
A\phi_k)(x,\xi))
|(1+|\xi|)^{|\alpha| +T},$$     
$$ \sup_{x,y}|\partial_x^\alpha\partial_y^\beta k(x,y)|,$$
where $\psi$ is a smooth function vanishing near zero and identically one outside a small ball centered at the origin \cite[\S18.1]{hor}.  The topology is independent of the choices of $\psi$, $\{U_\ell\}$, and $\{\phi_\ell\}.$
  Since elements $A$ of the gauge group $\calG$ of $E$ are order zero multiplication
operators with $\sigma_0(A)(x,\xi)$ independent of $\xi$, the gauge group inherits the 
usual $C^\infty$ Fr\'echet topology in $x$.   

\begin{lemma} \label{cont-incl} For the Fr\'echet topology on $\pdoo$ and the norm topology on $\gl(\calH)$, the 
inclusion $\pdoo \to\gl(\calH)$ is continuous.
\end{lemma}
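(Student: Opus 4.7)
The plan is to bound the operator norm on $\calH$ by finitely many of the seminorms (\ref{frechet}); by linearity this gives continuity of the inclusion. I use the decomposition $A = A^1 + A^0$ from the text, where $A^1 = \sum'_{j,k}\phi_j A\phi_k$ is a finite sum of properly supported pieces and $A^0$ has a smooth kernel $k(x,y)$, and estimate the two pieces separately.

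For the smoothing part $A^0$, the kernel $k$ is smooth on the compact manifold $N \times N$ (since $N$ is closed by the conventions of this section), and a standard Schwartz kernel estimate gives
$$\|A^0\|_{\gl(\calH)}\le C\sum_{|\alpha|+|\beta|\le k_0}\sup_{x,y}|\partial_x^\alpha\partial_y^\beta k(x,y)|$$
for an integer $k_0$ depending only on $s_0$ and $\dim N$. This is precisely the last family of seminorms in (\ref{frechet}), so continuity of the inclusion restricted to smoothing operators is immediate.

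For each of the finitely many pieces $\phi_j A\phi_k$, the symbol $\sigma(\phi_j A\phi_k)$ is a classical zeroth order symbol supported in the chart $U_j$. The Calder\'on-Vaillancourt theorem then bounds
$$\|\phi_j A\phi_k\|_{L^2\to L^2}\le C\!\!\!\sum_{|\alpha|+|\beta|\le N_0}\sup_{x,\xi}(1+|\xi|)^{|\alpha|}|\partial_x^\beta\partial_\xi^\alpha\sigma(\phi_j A\phi_k)(x,\xi)|$$
for a fixed $N_0 = N_0(\dim N)$, and the right hand side is controlled by the first family of seminorms in (\ref{frechet}). To upgrade from $L^2$ to $H^{s_0}$, I conjugate by an invertible elliptic order-$s_0$ classical $\pdo$ $\Lambda^{s_0}$ acting on sections of $E$ (for instance, built from $(1+\Delta_N)^{s_0/2}$ tensored with the identity). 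The conjugate $\Lambda^{s_0}\phi_j A\phi_k\Lambda^{-s_0}$ is again a classical zeroth order $\pdo$, and its full symbol is obtained from $\sigma(\phi_j A\phi_k)$ by a composition formula that is continuous in the Fr\'echet topology. Applying Calder\'on-Vaillancourt to the conjugate and then summing over the finite collection of $(j,k)$ gives $\|A^1\|_{\gl(\calH)}$ bounded by finitely many of the seminorms (\ref{frechet}).

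The main obstacle is the Calder\'on-Vaillancourt step: one must verify that the resulting constant is governed by an \emph{a priori} fixed, finite list of symbol seminorms, and that conjugation by $\Lambda^{s_0}$ preserves this kind of continuous dependence on the symbol. Both facts are standard in the $\pdo$ literature (see e.g.\ \cite{hor, shubin}); the latter follows from the asymptotic expansion for compositions of classical $\pdo$s, which shows that each symbol seminorm of the conjugate is bounded by finitely many symbol seminorms of the original.
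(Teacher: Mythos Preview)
Your proof is correct and uses the same decomposition $A = A^1 + A^0$ as the paper. The difference is in how you handle $A^1$: the paper carries out an explicit Fourier-analytic argument following \cite[Lemma~1.2.1]{gilkey} (write $\widehat{A^1 f}(\zeta) = \int q(\zeta-\xi,\xi)\hat f(\xi)\,d\xi$ with $q(\zeta,\xi)=\int e^{-ix\cdot\zeta}a(x,\xi)\,dx$, then use Cauchy--Schwarz and integrability of $q$ forced by the $x$-derivative seminorms), whereas you invoke Calder\'on--Vaillancourt as a black box. Both reduce $H^{s_0}$ to $L^2$ by conjugation with an order-$s_0$ elliptic operator; the paper phrases this as ``$H^{s_0}$ is isometric to $L^2$'' without dwelling on the point you make explicit, that the symbol seminorms of the conjugate are controlled by those of the original via the composition calculus. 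Your route is shorter and makes clearer exactly which finite set of seminorms is needed; the paper's hands-on estimate has the virtue of being self-contained and of showing directly that only first-order $x$-derivatives of the symbol enter, which is slightly sharper than the generic Calder\'on--Vaillancourt bound.
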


\begin{proof} 
We follow \cite[Lemma 1.2.1]{gilkey}.  Let $A_i\to 0$ in $\pdo_{\leq 0}.$  Since $H^{s_0}$ is isometric 
to $L^2 = H^0$ for any $s_0$, it suffices to show that $\Vert A_i\Vert\to 0$ for $A_i:L^2\to L^2.$

As usual, the computations reduce to estimates in local charts.  We abuse notation by 
writing $\sigma(\phi_jA_i\phi_k)$ as $\sigma(A^1_i) = a_i.$
Then
$$\widehat{A^1_if}(\zeta) = \int e^{ix\cdot(\xi-\zeta)} a_i(x,\xi)\hat f(\xi) d\xi dx
= \int q_i(\zeta-\xi, \xi)\hat f(\xi) d\xi$$
for $q_i(\zeta, \xi) = \int  e^{-ix\cdot \zeta} a_i(x,\xi) dx.$ (We are using a normalized version of $dx$ in the Fourier transform.)  By \cite[Lemma 1.1.6]{gilkey},
$|A^1_i f|_0 = \sup_g \frac{|(A^1_if,g)|}{|g|_0}$, where $g$ is a Schwarz function and we use the
$L^2$ inner product.  By Cauchy-Schwarz, 
$$|(A^1_if,g)| \leq
 \left(\int |q_i(\zeta-\xi,\xi)| \ |\hat f(\xi)|^2 d\zeta d\xi\right)^{1/2}
 \left(\int  |q_i(\zeta-\xi,\xi)| \ |\hat g(\xi)|^2 d\zeta d\xi\right)^{1/2}.$$
 
 We claim that \begin{equation}\label{est}
 |q_i(\zeta-\xi, \xi)| \leq C_i, \int |q_i(\zeta-\xi, \xi)| d\xi \leq C'_i,
 \int |q_i(\zeta-\xi, \xi)| d\zeta \leq C''_i,
 \end{equation}
  with $C_i, C'_i, C''_i\to 0$ as $i\to\infty.$  If so,
 $|(A^1_if,g)| \leq D_i |f|_0\ |g|_0$ with $D_i\to 0$, and so $\Vert A^0_i\Vert \to 0.$
  
For the claim, we know $|\partial_x^\alpha a_i(x,\xi)| \leq C_{\alpha, i}$ with $C_{\alpha,i}\to 0.$  Since $a_i$ has compact $x$ support, we get 
$$|\zeta^\alpha q_i(\zeta, \xi)| = \left| \int e^{-ix\cdot \zeta} \partial^\alpha_x a_i(x,\xi) dx\right|
\leq C_{\alpha,i}$$ for a different constant decreasing to zero. In particular, 
$|q_i(\zeta, \xi)| \leq C_i(1+|\xi|)^{-1-n/2}$, for $n = {\rm dim}(N).$  This shows that 
$q_i(\zeta-\xi, \xi)$ is integrable and  satisfies (\ref{est}).

It is straightforward to show that $A^0_i\to 0$ in the Fr\'echet topology on smooth kernels
implies $\Vert A^0_i\Vert \to 0.$  Thus $\Vert A_i\Vert \leq \Vert A^0_i\Vert + \Vert A^1_i\Vert \to 0.$
\end{proof}

In order
to extend the leading order Chern class to  $\bpsi$-bundles, 
we associate an operator to the symbol of $A \in\pdoo.$   Set 
$$\opa (f)(x) =
 \sum_{j,k}{}' \int _{U_j} e^{i(x-y)\cdot\xi} \sigma(\phi_j A\phi_k)(x.\xi) f(y)dyd\xi
 $$
Then $A - \opa\in\pdomi$, the closed ideal of $\pdo$s of order $-\infty$,  and 
$\sigma(A) \stackrel{\rm def}{=} \sigma(A^1) = \sigma(\opa).$  Note that $\opa$ is 
shorthand for the $\pdo$ ${\rm Op}(\sigma(A))$
 noncanonically associated to $\sigma(A) \in  \Gamma({\rm End}(\pi^*E)\to S^*N).$

\begin{definition} $$\op = \{\opa: A\in \pdoo\}$$
\end{definition}

We emphasize that $\op$ depends on a fixed atlas and subordinate partition of unity for 
$N$. 
 The closed vector space $\op$ is not an algebra, but the linear map $o:\pdoo\to\op, A\mapsto \opa$ is continuous.
 Let $\bop$ be the closure of $\op$ in $\glh$.
 
Fix $K>0$, and set 
\begin{eqnarray} \label{decay} \op^K &=& \{ \opa: |\partial^\alpha_\xi (\sigma(A) - \sigma_0(A))(x,\xi)| 
\leq K(1 + |\xi|) ^{-1},  \\
&&\quad  |\partial_\xi^\alpha \sigma_0(A)(x,\xi)| \leq K,
\ {\rm for}\ 
|\alpha| \leq 1, 
\forall (x,\xi)\in T^*U_j, 
\forall j\}.\nonumber
\end{eqnarray}
Since $\sigma_0$ has homogeneity zero and $S^*N$ is compact, every $\opa\in \op$ 
lies in some $\op^K.$

\begin{lemma}  \label{key lemma} $A\mapsto \int_{S^*N}\tr\ \sigma_0(A)(n,\xi) d\xi\ \dvol(n)$ extends from a continuous map 
 on $\op^K$ to a continuous map on $\overline{\op^K}.$
\end{lemma}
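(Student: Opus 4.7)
The plan is to show that the functional
$L(\op(A)) := \int_{S^*N} \tr\, \sigma_0(A)(n,\xi)\, d\xi\, \dvol(n)$
is Lipschitz on $\op^K$ in the operator norm inherited from $\glh$, and then to invoke the standard fact that a uniformly continuous map from a subset of a metric space extends uniquely by continuity to the closure. Although $\op^K$ itself is not a linear subspace (sums of symbols with $C^1$-bound $K$ have bound $2K$), the ambient set $\op$ is linear via the linearity of the quantization recipe $A\mapsto\op(A)$, so differences $\op(A)-\op(B)$ make sense as zeroth order PDOs with leading symbol $\sigma_0(A)-\sigma_0(B)$.

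The central estimate I would establish is
$$
|L(\op(A)) - L(\op(B))| \;\leq\; C\, \|\op(A)-\op(B)\|,
\qquad C = (\operatorname{rank} E)\cdot\operatorname{Vol}(S^*N).
$$
Bounding $|\tr\, M|\le (\operatorname{rank} E)\cdot\|M\|$ fiberwise and integrating reduces the task to controlling $\sup_{S^*N}|\sigma_0(A)-\sigma_0(B)|$ by $\|\op(A)-\op(B)\|$. The key analytic input is the classical principal symbol estimate
$\sup_{(x,\xi)\in S^*N}|\sigma_0(T)(x,\xi)| \le \|T\|_{L^2\to L^2}$
for any zeroth order PDO $T$ on the closed manifold $N$. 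I would prove this by fixing $(x_0,\xi_0)\in S^*N$ and testing $T$ against normalized coherent states $f_\lambda\in L^2$ concentrated at $x_0$ with frequency $\lambda\xi_0$; a stationary phase computation then gives $\|Tf_\lambda\|_{L^2} \to |\sigma_0(T)(x_0,\xi_0)|$ as $\lambda\to\infty$, forcing $|\sigma_0(T)(x_0,\xi_0)| \le \|T\|$, and the supremum bound follows. The uniform $C^1$-bound in the definition of $\op^K$ plays the auxiliary role of ensuring, via Calder\'on--Vaillancourt, that the operators in question are bounded on $L^2$ with uniformly controlled norms, so that the estimate applies across the family.

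With the Lipschitz bound in hand, the extension is formal: any sequence $\op(A_n)$ in $\op^K$ converging in $\glh$ produces a Cauchy sequence $L(\op(A_n))$ in $\BbC$ whose limit is independent of the approximating sequence, defining a continuous map $\bar L:\overline{\op^K}\to\BbC$. The main obstacle in this strategy is the principal symbol estimate itself, which is standard in the literature but must be verified carefully for the specific quantization $\op(A)$ built from a fixed atlas and subordinate partition of unity on $N$; in particular, one must confirm that local coherent states assemble to a globally normalized element of $H^{s_0}\Gamma(E)$ so that the operator norm genuinely dominates the local leading symbol values, and that the off-diagonal terms $\phi_j A \phi_k$ with nonempty support overlap do not introduce additional contributions to the leading symbol that escape the norm estimate.
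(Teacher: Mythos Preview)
Your approach is correct and takes a genuinely different route from the paper's.  The paper argues by contradiction at the level of the \emph{full} symbol: assuming $\{\op(A_i)\}$ is norm-Cauchy but the symbols are not, it tests the operators against functions whose Fourier transform is a bump concentrated at a single covector $\xi_0$, and uses Taylor's theorem together with the uniform $C^1$-bound $K$ on $\partial_\xi\sigma$ to control the remainder independently of $i,j$.  This yields that $\{\sigma(A_i)\}$ is uniformly Cauchy on $S^*N$; the second clause in the definition of $\op^K$ (the decay $|\sigma-\sigma_0|\le K(1+|\xi|)^{-1}$) is then invoked to strip off the lower-order part and conclude that $\{\sigma_0(A_i)\}$ is Cauchy.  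Thus in the paper the constant $K$ is used essentially, at both steps.

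Your coherent-state argument bypasses both uses of $K$.  The principal-symbol inequality $\sup_{S^*N}\|\sigma_0(T)\|\le \|T\|_{L^2\to L^2}$ holds for \emph{each} classical zeroth-order $T$ on a closed manifold, so applying it to $T=\op(A)-\op(B)$ gives the Lipschitz bound with no uniformity hypothesis across the family.  Consequently your argument in fact proves the subsequent Corollary (extension to all of $\overline{\op}$) in one stroke, without the patching $\op=\bigcup_n\op^{nK}$.  Two small remarks: your comment that the $K$-bound enters via Calder\'on--Vaillancourt is superfluous, since your estimate is per-operator and needs no family uniformity; and the worry about the specific quantization $\op$ is not a real obstacle, because the principal symbol is invariantly defined and the overlapping pieces $\phi_jA\phi_k$ sum via the partition of unity to leading symbol $\sigma_0(A)$, with only chart-dependent lower-order corrections that do not enter the principal-symbol bound.
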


\begin{proof}

We must show that if $\{{\rm Op}_1(A_i)\}\subset \op^K$ is Cauchy in the norm topology on 
${\rm End}(\calH)$,  then $\{\tr\ \sigma(A_i)\}$ is Cauchy in $L^1(S^*N).$  Fix a finite
cover $\{U_\ell\}$ of $N$ with $\bar U_\ell$ compact.
 The hypothesis is 
$$\left\Vert \int_{T^*U_k}e^{i n\cdot \xi}\phi_\ell(n)(\sigma(A_i) -\sigma(A_j))(n,\xi)
\phi_k(n) \widehat{ f}(\xi)\ d\xi\right\Vert_{s_0} <
\epsilon \Vert f\Vert_{s_0}$$
for $i, j > N(\epsilon),$ and for each $\ell,k$ with ${\rm supp}\ \phi_\ell \cap {\rm supp}\ 
\phi_k\neq \emptyset.$
  Sobolev embedding implies that
\begin{equation}\label{gij}
n\mapsto g_{i,j}(n) = \frac{1}{ \Vert f\Vert_{s_0}}\int_{T_n^*U_k} e^{in\cdot \xi}\phi_\ell(n)
(\sigma(A_i) -\sigma(A_j))(n,\xi)\phi_k(n) \widehat{  f}(\xi)\ d\xi
\end{equation}
is $\epsilon$-small in $C^r(U_k)$ for any $r < s_0 - ({\rm dim}\ N)/2$ and any fixed   $f\in H^{s_0}.$

 Fix $U_\ell$, and pick $\xi_0$ in the cotangent space of a point $n_1\in U_\ell$
 with $\phi_\ell(n_1)\phi_k(n_1)$\\
 $\neq 0$. We can
 identify all cotangent spaces in $T^*U_\ell$ with $T^*_{n_1}U_\ell.$  We claim that
$$h_{i,j}(n, \xi_0) =\phi_\ell(n)( \sigma(A_i) -\sigma(A_j))(n,\xi_0)\phi_k(n)$$
has $|h_{i,j}(n,\xi_0)|<\epsilon$  for all $n\in U_k$, for all $\ell$, and for 
$i, j \gg 0$.  Since $\sigma_0$ has
homogeneity zero, we may assume that $\xi_0\in S^*N.$
 Thus we claim that \\
 $\{\phi_\ell(n)\sigma(A_i)(n, \xi_0)\phi_k(n)\}$ is 
Cauchy in this fixed chart, and so by compactness
$\{\sum_{\ell, k}'\sigma(\phi_\ell A_i\phi_k) = \sigma(A_i)\}$ will be  uniformly Cauchy on all of $S^*N.$

For the moment, we assume that our symbols are scalar valued.
 If
the claim fails,
 then by compactness of $N$ there exists $n_0$ and $\epsilon >0$ such that there exist $i_k, j_k \to\infty$ with $|h_{i_k j_k}(n_0,\xi_0)| > \epsilon.$
Let $\widehat {f} = \widehat {f}_{\xi_0, \delta}$ be a 
bump function of height 
$  
e^{-in_0\cdot\xi_0}$  concentrated on $B_{r(\delta)}(\xi_0)$, the metric ball in $T_{n_0}^*U_\ell$ centered at $\xi_0$ and of volume $\delta$,
and let $b_{\xi_0, \delta}$ be the corresponding bump function of height one.
 Taylor's theorem in the form 
 \begin{equation}\label{taylor}
 a(\xi_0) - a(\xi) = - \sum_\ell (\xi-\xi_0)^k\int_0^1 \partial^\ell_\xi a(\xi_0 + t(\xi-\xi_0)) dt
 \end{equation}
 applied to $a(\xi) =  h_{i_kj_k}(n_0,\xi)$
 implies
 \begin{eqnarray*} 
 \lefteqn{
 \left|  \int_{T_{n_0}^*U_\ell}  e^{in_0(\xi-\xi_0)} h_{i_kj_k}
 (n_0, \xi)b_{\xi_0, \delta} d\xi \right| }\\ 
 &\geq &
 \left|  h_{i_kj_k}
 (n_0, \xi_0) \int_{T_{n_0}^*U_\ell} e^{in_0\xi} b_{\xi_0, \delta} 
 d\xi\right| \\
 &&\qquad -\left|    \int_{T_{n_0}^*U_\ell}  e^{in_0(\xi-\xi_0)}( h_{i_kj_k} (n_0, \xi_0) - h_{i_kj_k}(n_0,\xi)) b_{\xi_0, \delta} d\xi\right|\\
 &\geq & \frac{1}{2} h_{i_kj_k}(n_0,\xi_0) \delta -  r(\delta) F(\delta, (n_0, \xi_0))\delta,
  \end{eqnarray*}
for some $F(\delta,(n_0,\xi_0) )\to 0$ as $\delta \to 0$.  To
produce this $F$,  we use $|(\xi-\xi_0)^k|\leq r(\delta)$
and (\ref{decay}) with $|\alpha| = 1$ to bound the partial derivatives of 
$ h_{i_kj_k}(n_0,\xi)$ in (\ref{taylor}) by a constant independent of $i_k, j_k.$  
 For $\delta$ small enough, 
 $ r(\delta)F(\delta, (n_0, \xi_0))) < \frac{1}{4}|h_{i_kj_k}(n_0, \xi_0)|$
 for all $k$, and so 
 \begin{equation}\label{hij}
\left| \int_{T_{n_0}^*U_\ell} 
 e^{in_0(\xi-\xi_0)} h_{i_kj_k}
 (n_0, \xi)b_{\xi_0, \delta} d\xi \right| > \frac{1}{4}|h_{i_kj_k}(n_0, \xi_0)|\delta
 \end{equation} 
 
 Similarly,  with some abuse of notation,  we have
 $$\Vert f\Vert_{s_0} =
  \left( \sum_\ell \int_{U_\ell} 
  (1+|\xi|^2)^{s_0} |\widehat{ (\phi_\ell\cdot f)}(\xi)|^2 d\xi\right)^{1/2}.$$
  Since $|\widehat{ (\phi_\ell\cdot f)}(\xi)| 
\leq  C\cdot \hat \phi_\ell(\xi-\xi_0)\delta$ for some constant $C$ which we can take independent of $\delta$ for small $\delta$, we get
 \begin{eqnarray}\label{denom} 
 \Vert f\Vert_{s_0} & \leq & \left( \Sigma_\ell \int_{U_\ell} (1+|\xi|^2)^{s_0} C \delta^2|\hat\phi_\ell(\xi-\xi_0)|^2
 \right)^{1/2}\\
 &\leq& C\delta\nonumber
 \end{eqnarray}
 where $C$ changes from line to line.  
 
 Substituting (\ref{hij}) and (\ref{denom}) into (\ref{gij}), we obtain 
 $$ |g_{i_kj_k}(n_0)| \geq C|h_{i_kj_k}(n_0, \xi_0)| \geq C\epsilon$$
 for all $k$, a contradiction.  Thus $h_{ij}(n, \xi_0)$ has the claimed estimate.
 
 If the symbol is matrix valued, we replace the bump functions by sections of the 
 bundle $E$ having the $r^{\rm th}$ 
coordinate in some local chart given by the bump functions and all other coordinates zero.  
The argument above shows that 
 the $r^{\rm th}$ columns of $\sigma(A_i)$ form a Cauchy sequence, and so each sequence
 of entries $\{\sigma(A_i)^s_r\}$ is Cauchy.  
 
 If $\{\tr(\sigma_0(A_i))\}$ is not uniformly Cauchy on $S^*N$, then there exists $\epsilon >0$ 
with an infinite sequence of $i, j$ and 
 $(n, \xi)\in S^*N$ such that 
\begin{eqnarray*} \lefteqn{
|\tr(\sigma(A_i))(n , \lambda\xi) - \tr(\sigma(A_j))(n , \lambda\xi)| }\\
&=& 
|\tr((\sigma-\sigma_0)(A_i))(n , \lambda\xi) + \tr\ \sigma_0(A_i))(n , \lambda\xi)\\
&&\qquad 
-\tr((\sigma-\sigma_0)(A_j))(n , \lambda\xi) - \tr\ \sigma_0(A_j))(n , \lambda\xi)|\\
&\geq & |\tr(\sigma_0(A_i))(n , \xi) - \tr\ \sigma_0(A_j))(n , \xi)| \\
&&\qquad 
- |\tr((\sigma-\sigma_0)(A_i))(n , \lambda\xi) 
- \tr((\sigma-\sigma_0)(A_j))(n , \lambda\xi)|\\
&\geq & \epsilon - 2K(1+|\lambda|)^{-1}
\end{eqnarray*}
for all $\lambda >0.$  For $\lambda 
\gg 0$ this contradicts that $h_{i,j}$ is Cauchy.

 This implies that
$\{\tr(\sigma_0(A_i))\}$ is uniformly Cauchy on $S^*N$, so the claimed extension exists.
The continuity of the extension is immediate. 
\end{proof}

Fix $K$, and set
$$\calA =\calA^{K} =  \cup_{n\in \BbZ^+} \overline{\op^{nK}} \subset \gl(\calH),$$
where the closure is taken in the norm topology. 
Then
$$\bar \calA = \overline{\cup_{n\in \BbZ^+} \overline{\op^{nK}} }
= \overline {\cup_{n\in \BbZ^+} \op^{nK} } = \overline {\op} \subset \gl({\calH})$$
is independent of $K$.

\begin{corollary}  $A\mapsto \int_{S^*N}\tr\ \sigma_0(A)(n,\xi) d\xi\ \dvol(n)$ extends from a continuous map 
 on $\op$ to a continuous map on $\overline{\op} = \bar\calA.$
\end{corollary}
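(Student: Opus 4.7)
The plan is to assemble the continuous extensions supplied by Lemma \ref{key lemma} on each $\overline{\op^{nK}}$ into a functional on $\calA$, and then extend from $\op$ to all of $\bar\calA = \overline{\op}$ using the classical bound of the principal symbol by the $L^2$ operator norm. Throughout, set $T(A) := \int_{S^*N}\tr\ \sigma_0(A)\, d\xi\,\dvol$.

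First, for each $n\in\BbZ^+$, Lemma \ref{key lemma} provides a continuous extension $T_n$ of $T$ from $\op^{nK}$ to $\overline{\op^{nK}}$. Since $\op^{nK}\subset \op^{(n+1)K}$, uniqueness of continuous extension from a dense subset forces $T_{n+1}|_{\overline{\op^{nK}}}=T_n$, so the $T_n$ glue into a single continuous functional on $\calA=\bigcup_n\overline{\op^{nK}}$ agreeing with $T$ on $\op=\bigcup_n\op^{nK}$.

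The key step is to upgrade $T$ on $\op$ to a Lipschitz functional in the operator norm. For this I would invoke the standard pseudodifferential estimate that on a compact manifold the principal symbol of a zeroth-order $\pdo$ satisfies $\|\sigma_0(A)\|_{L^\infty(S^*N,\mathrm{End}\,\pi^*E)}\le \|A\|_{L^2\to L^2}$ (fiberwise operator norm), which transfers to $\glh$ via the $H^{s_0}\cong L^2$ isometry already used in the proof of Lemma \ref{cont-incl}. Integrating yields
$$|T(A)|\le \mathrm{vol}(S^*N)\cdot \mathrm{rank}(E)\cdot \|A\|_{\glh},$$
and linearity then gives a Lipschitz bound on $T|_{\op}$. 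Since $\op$ is dense in $\overline{\op}$, this bound forces a unique continuous extension of $T$ to $\overline{\op}=\bar\calA$; by uniqueness the extension restricts to each $T_n$ on $\overline{\op^{nK}}\subset\calA$, so the construction is consistent.

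The main obstacle is the matrix-valued, Sobolev-level version of the principal-symbol norm estimate. It is standard but one must confirm it cleanly in this bundle setting. A route avoiding the external estimate would require refining Lemma \ref{key lemma} to produce a modulus of continuity independent of $K$, which looks delicate because the Taylor-remainder argument in its proof uses (\ref{decay}) with $K$ in an essential way.
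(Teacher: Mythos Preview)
Your gluing argument on $\calA=\bigcup_n\overline{\op^{nK}}$ is exactly what the paper does: the extensions $T_n$ agree on overlaps by uniqueness of continuous extension from a dense subset, so one obtains a well-defined functional on $\calA$ restricting to $T$ on $\op$.

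Where you diverge is in the passage from $\calA$ (equivalently $\op$) to $\bar\calA=\overline{\op}$. The paper simply writes ``Since $\sigma$ is continuous on $\calA$, it extends to a continuous linear functional on $\bar\calA$,'' apparently appealing to linearity plus continuity. You instead import the classical principal-symbol estimate $\|\sigma_0(A)\|_{L^\infty(S^*N)}\le \|A\|_{L^2\to L^2}$ to produce an explicit global bound $|T(A)|\le \mathrm{vol}(S^*N)\cdot\mathrm{rank}(E)\cdot\|A\|$, from which the extension is immediate. Your route is correct and arguably more transparent: as you yourself note, Lemma~\ref{key lemma} only delivers continuity on each stratum $\overline{\op^{nK}}$ with a modulus that \emph{a priori} depends on $K$ through the Taylor-remainder step, and continuity of a linear functional on each member of an increasing union of subspaces does not by itself imply boundedness on the union. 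The paper's one-line extension is thus terser but leans on an implicit uniform bound that your external estimate supplies directly. The cost of your approach is invoking a fact not proved in the paper; your caveat about verifying it in the bundle-valued $H^{s_0}$ setting is appropriate, though the reduction to $L^2$ via the isometry used in Lemma~\ref{cont-incl} handles this cleanly.
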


\begin{proof}  Fix $K$. We first show that the leading order symbol extends to $\calA.$ 
For $n'>n,$ the inclusion $i_{n,n'}:\overline{\op^{nK}}\to \overline{
\op^{n'K}}$ has
$\sigma^{n'K}\circ i_{n,n'} = \sigma^{nK}$ for the extensions $\sigma^{nK}, \sigma^{n'K}$ on 
$\overline{\op^{nK}}, \overline{\op^{nK'}}$.
Thus for any $K$, we
 can unambiguously set $\sigma(A) = \sigma^{nK}(A)$ for $A\in \overline{\op^{nK}}$.  
 The continuity follows from the previous lemma.   Since the extension is linear, it is immediate that the extension is infinitely
 Fr\'echet differentiable on $\calA$ inside the Banach space $\gl(\calH).$
 
 Since $\sigma$ is continuous on $\calA$, it extends to a continuous linear functional on $\bar\calA$, which is again smooth.  
 \end{proof}

To discuss the tracial properties of extensions of $\int_{S^*N}\tr\ \sigma_0$, we must have
algebras of operators. 
Since $\op(AB) - \op(A)\op(B)\in \pdom$, $\op/\pdom$ (i.e. $\op/(\pdom\cap\op)$)
is an algebra.  Since $A-\opa\in\pdom$, we have $\op/\pdom\simeq \pdoo/\pdom.$  
Note that if $\op'(A)$ is defined as for
$\op(A)$ but with respect to a different
atlas and partition of unity, then
$\op(A) - \op'(A)$  is a smoothing operator.  Thus $\op/\pdom$ is canonically defined, independent of these choices. 

Let $\bop$ be the closure of $\op$ in $\glh$, and let
$\calC$ denote the closure  of $\pdomi$ in $\bop.$  $\calC$ is easily a closed ideal in $\bop.$
On quotients of normed algebras, we take 
the quotient norm $\Vert [A]\Vert = \inf\{\Vert A\Vert: A\in [A]\}.$

\begin{proposition}\label{prop extend}  $A\mapsto \int_{S^*N}\tr\ \sigma_0(A)(n,\xi) d\xi\ \dvol(n)$ extends from a continuous trace  
 on $\pdoo/\pdom$ to a continuous trace $\sigma$ on $\bop/\calC\subset \glh/\calC.$
 \end{proposition}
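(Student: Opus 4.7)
The plan is to reduce the claim to three steps: (i) verify that the continuous extension $\sigma$ on $\bop$ produced by the preceding corollary vanishes on $\calC$ and so descends to a continuous linear functional on $\bop/\calC$; (ii) verify the trace identity on the dense subspace $\pdoo/\pdom$, where it reduces to the multiplicativity of leading symbols and the matrix trace; (iii) propagate the identity to $\bop/\calC$ by density and joint continuity of multiplication in $\glh$.

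For step (i), every $S \in \pdom$ has symbol of order $-\infty$, so the integrand $\tr\,\sigma_0(S)$ vanishes identically on $S^*N$. Thus $\sigma$ vanishes on $\pdom \cap \op$, and since $\sigma$ is continuous on $\bop = \overline{\op}$ by the preceding corollary, it vanishes on the norm closure $\calC = \overline{\pdom}$. The universal property of the quotient norm then yields a bounded linear functional on $\bop/\calC$.

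For step (ii), multiplicativity of leading symbols gives $\sigma_0(AB)(x,\xi) = \sigma_0(A)(x,\xi)\,\sigma_0(B)(x,\xi)$ for $A, B \in \pdoo$. Since the fiberwise matrix trace is a trace, $\tr\,\sigma_0(AB) = \tr(\sigma_0(A)\sigma_0(B)) = \tr(\sigma_0(B)\sigma_0(A)) = \tr\,\sigma_0(BA)$ pointwise on $S^*N$, and integration over $S^*N$ produces the identity $\sigma([A][B]) = \sigma([B][A])$ in $\pdoo/\pdom$.

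The third step, and the main obstacle, is first to verify that $\bop/\calC$ carries a genuine Banach-algebra structure, since $\calC$ need not be a two-sided ideal in all of $\glh$. The relation $\op(A)\op(B) - \op(AB) \in \pdom \subset \calC$ noted above shows $\op \cdot \op \subset \op + \calC$; taking norm closures together with joint continuity of multiplication in $\glh$ gives $\bop \cdot \bop \subset \bop$, so $\bop$ is a closed subalgebra of $\glh$. A parallel density argument, starting from the fact that $\pdom$ is a two-sided ideal in $\pdoo$, shows $\bop \cdot \calC \subset \calC$ and $\calC \cdot \bop \subset \calC$, so $\calC$ is a closed two-sided ideal in $\bop$ and $\bop/\calC$ inherits the quotient Banach-algebra structure. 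Finally, given $[A], [B] \in \bop/\calC$, choose sequences $A_n, B_n \in \op$ with $A_n \to A$ and $B_n \to B$ in norm; then $A_n B_n \to AB$ and $B_n A_n \to BA$. The trace identity from step (ii) applied to each $(A_n, B_n)$ together with continuity of $\sigma$ on $\bop/\calC$ yields $\sigma([A][B]) = \sigma([B][A])$, completing the proof.
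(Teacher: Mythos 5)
Your proposal follows essentially the same route as the paper's own proof: the functional descends and extends to $\bop/\calC$ because it annihilates smoothing operators and is norm-continuous, and the trace identity is verified on the dense subalgebra $\op/\pdom$ (multiplicativity of leading symbols plus the matrix trace) and then transported to the closure by approximating $A,B$ with sequences from $\op/\pdom$ and invoking continuity of $\sigma$ and of multiplication. Your step (iii), checking that $\bop$ is a closed subalgebra of $\glh$ and that $\calC$ is a closed two-sided ideal so that $\bop/\calC$ is a Banach algebra, is exactly what the paper disposes of just before the proposition with ``$\calC$ is easily a closed ideal in $\bop$,'' so it is a useful elaboration rather than a different argument.
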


\begin{proof}  Since $\pdom\subset \calC$, it is immediate that the leading order symbol integral descends to a continuous
trace on $\pdoo/\pdom\simeq \op/\pdom$ and extends to a continuous map on $\bop/\calC$.  
To see that the extension is a trace, 
 take $A, B\in \bop/\calC$ and $A_i, B_i\in \op/\pdom$ with 
 $A_i\to A, B_i\to B$.  The leading order symbol
 trace
 vanishes on $[A_i, B_i]$, so by continuity $\sigma([A,B]) = 0.$   
\end{proof}

We can pass from the Lie algebra to the Lie group level via the commutative diagram
\begin{equation}\label{cd}
\begin{CD}
\Gamma{\rm End}(E) @>>> \pdo_{\leq 0} @>>> \frac{\pdo_{\leq 0}}{\pdom} \simeq
\frac{\op}{\pdom} @>>>  \frac{\bop}{\calC} \\
@V\exp VV     @V\exp VV @V\exp VV @V\exp VV\\
\calG @>>>\pdo_0^*@>>>     \frac{ \pdo_0^*}{(I + \pdom)^*} @.
\frac{\exp(\bop)}{(I+\calC)^* }
\end{CD}
\end{equation}
 $\calG$ is 
 the gauge group of $E\stackrel{\pi}{\to} N$, $(I + \pdom)^*$
 refers to invertible operators $I + B, B\in \pdom$, and similarly for other
 groups on the bottom line. 
 The diagram consists of
 continuous maps if the spaces in the first three columns have either the norm or 
 the Fr\'echet topology and the spaces in the last  column have the norm topology.
 The exponential map is clearly surjective in the first column, and   by standard Banach space arguments, the exponential map is surjective in the fourth  column.  The surjectivity of the second and third column is not obvious.  In particular, there is no obvious map from $
  \frac{ \pdo_0^*}{(I + \pdom)^*}$ to 
$\frac{\exp(\bop)}{(I+\calC)^* }$.
 In any case,
the maps on the bottom line are group  
 homomorphisms. 
  
As discussed in the beginning of this section, it would be desirable to work with $\bar\Psi = \overline{\pdo_0^*}$ or the closure of the standard variant $  \frac{ \pdo_0^*}{(I + \pdom)^*} $.
 However, it seems difficult if not impossible
to extend the leading order symbol trace to the corresponding Lie algebras $\overline{\pdoo},
\overline{\pdo_{\leq 0}}/\overline{\pdomi}.$  The following re-definition is our substitute for $\overline{\pdo_0^*}$, as it is the group associated to the largest known Lie algebra with an extension
(strictly speaking, a factorization) of  the leading order symbol trace. 
 
 \begin{definition}\label{def} $\bar\Psi = \frac{\exp(\bop)}{(I+\calC)^* }$.
 \end{definition}

The smooth factorization  of the leading order symbol trace $\sigma: \pdo_{\leq 0}\to\BbC$ 
through 
$\bop/\calC$  gives us a geometric theory of characteristic classes for $\bpsi$-bundles.

\begin{theorem} \label{ext thm} The leading order Chern classes $\cklo$ extend to $\bpsi$-bundles over 
paracompact manifolds.
\end{theorem}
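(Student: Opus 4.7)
The plan is to carry out the standard Chern--Weil recipe in the Banach--Lie setting, using the continuous trace $\sigma$ on $\bop/\calC$ produced in Proposition \ref{prop extend} as the crucial ingredient. Since $\bar\Psi = \exp(\bop)/(I+\calC)^*$ lives inside the Banach Lie group $\glh/\calC$, its Lie algebra may be identified with the Banach algebra $\bop/\calC$, and $\sigma$ is a continuous trace on this algebra that is consequently $\mathrm{Ad}$-invariant under $\bar\Psi$.

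First, for a $\bar\Psi$-bundle $\calE \to \calM$ over a paracompact manifold, I would construct a connection $\nabla$ by patching local trivializations using a partition of unity subordinate to a trivializing cover $\{U_\alpha\}$ of $\calM$. On each $U_\alpha$ the connection is represented by a local one-form $\theta_\alpha \in \Lambda^1(U_\alpha, \bop/\calC)$, with the usual transformation rule $\theta_\beta = g_{\beta\alpha}^{-1}\theta_\alpha g_{\beta\alpha} + g_{\beta\alpha}^{-1}dg_{\beta\alpha}$ under transition functions in $\bar\Psi$. The curvature $\Omega_\alpha = d\theta_\alpha + \theta_\alpha \wedge \theta_\alpha$ transforms by conjugation, so $\Omega^k$ takes values in the associated $\mathrm{Ad}$-bundle with fiber $\bop/\calC$.

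Second, I would apply the trace $\sigma$ fiberwise: by $\mathrm{Ad}$-invariance, $\sigma(\Omega_\alpha^k) = \sigma(\Omega_\beta^k)$ on overlaps, so these local expressions patch to a global $2k$-form $\sigma(\Omega^k) \in \Lambda^{2k}(\calM, \BbC)$. The Bianchi identity $d\Omega = [\Omega, \theta]$ together with the trace property gives $d\sigma(\Omega^k) = k\,\sigma(\Omega^{k-1}\wedge [\Omega,\theta]) = 0$, so the form is closed. Independence of the choice of connection is the standard Chern--Simons transgression argument: for a smooth path $\nabla_t$ of connections with curvatures $\Omega_t$, one verifies that $\sigma(\Omega_1^k) - \sigma(\Omega_0^k) = d\int_0^1 k\,\sigma(\dot\theta_t \wedge \Omega_t^{k-1})\,dt$, using only the trace and Leibniz properties. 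Hence $\cklo(\calE) := [\sigma(\Omega^k)] \in H^{2k}(\calM, \BbC)$ is well defined. Compatibility with Definition \ref{def} on $\GG$-bundles that factor through $\bar\Psi$ via diagram (\ref{cd}) is immediate, since $\sigma$ restricts to $\int_{S^*N}\tr\,\sigma_0$ on $\op/\pdom \simeq \pdoo/\pdom$.

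The main obstacle is verifying that the Banach--Lie group quotient structure on $\bar\Psi$ is compatible enough with the usual patching constructions: one must show that local lifts of transition functions from $\bar\Psi$ to $\exp(\bop)$ exist (so that $g^{-1}dg$ makes sense as a form with values in $\bop/\calC$) and that the ambiguity caused by the quotient by $(I+\calC)^*$ is absorbed into $\calC$ and hence killed by $\sigma$. Because $\calC$ is a closed ideal in $\bop$ and $\sigma$ vanishes on it by construction (it factors through $\bop/\calC$), this ambiguity poses no obstruction to well-definedness, but the bookkeeping is the only nontrivial point beyond the standard Chern--Weil formalism.
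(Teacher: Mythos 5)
Your proposal is correct and follows essentially the same route as the paper: a connection exists by paracompactness, its curvature takes values in $\bop/\calC$, and the trace $\sigma$ of Proposition \ref{prop extend} is applied to $\Omega^k$, with closedness and connection-independence following from the standard Chern--Weil arguments. The paper simply compresses the Bianchi-identity and transgression steps you write out into the phrase ``as in finite dimensions,'' so the two arguments coincide in substance.
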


\begin{proof}   Such bundles $\calE\to B$  admit connections with curvature
$\Omega\in$\\
$ \Lambda^2(B, \bop/\calC).$
By the previous proposition, we may set
$$\cklo(\nabla)_b \stackrel{\rm def}{=}
\sigma(\Omega^k_b)$$
at each $b\in B.$  Since $\sigma$ is smooth, the corresponding de Rham 
$\cklo(E\bpsi)$ class is closed and 
independent of connection 
as in finite dimensions.  
\end{proof}

In summary, the leading order trace on  $\pdo_0^*$-bundles trivially factors
through $\pdo_0^*/(I+\pdom)^*$-bundles, as this quotient just removes the smoothing term
$A^0$ from an invertible $\pdo$ $A$.  By Proposition \ref{prop extend}, this
trace then extends continuously to $\bop/\calC$.  This space is morally the closure of 
$\op/\pdomi$ in $\glh/\calC$, but it is not clear that $\calC$ is an ideal in $\glh.$ In any case, the work in this section has the feel of extending a continuous trace from a set to
its closure.   

\begin{remark}  The groups in (\ref{cd}) give rise to different bundle theories, since the homotopy types of $\pdo_0^*$, and $\pdo_0^*/(I + \pdomi)^*$ differ \cite{R}; the homotopy type of $\calG$, discussed in \cite{A-B1}, is almost surely different from that of $\pdo_0^*, \pdo_0^*/(I + \pdomi)^*$.  
The relationship between the topology of $\pdo_0^*/(I + \pdomi)^*$ and $\bpsi = \exp(\bop)(I+\calC)^*$ is 
completely open, so presumably the bundle theories for these groups also differ.

Geometrically, one can construct $\bpsi$-connections by using a partition of unity to glue together trivial connections over trivializing neighborhoods in a paracompact base.  However, even for finite dimensional Lie groups, it is difficult to compute the corresponding Chern classes of such glued up connections.  As a result, we do not have examples of nontrivial leading order Chern classes for $\bpsi$-bundles.
\end{remark}

\section{Detecting cohomology of $B\pdo_0^*$}

We now turn to the second issue 
discussed in the beginning of \S3, namely that classifying spaces are not manifolds
in general.  
We could obtain information about $H^*(B\bar\Psi,\BbC)$ if $E\bar\Psi\to B\bar\Psi$  admitted a connection, but this presupposes that $B\bar\Psi$ is a manifold.  As an alternative, we consider the 
exact sequence of algebras associated to $E\stackrel{\pi}{\to} N$:
$$0\to \pdomo\to\pdoo\stackrel{\sigma_0}{\to} \Gamma{\rm End}(\pi^*E\to S^*N)\to 0,$$
which gives $\Gamma{\rm End}(\pi^*E\to S^*N)\simeq \pdoo/\pdomo.$  
Here $\pdomo$ is the algebra of classical integer order $\pdo$s of order at most $-1.$
Note that the quotient $\pdo_{\leq 0}/\pdomi$ considered in \S3 is more complicated topologically than $\pdo_{\leq 0}/\pdomo.$

We obtain the diagram
\begin{equation}
\label{diagone}
\begin{CD}
\Gamma{\rm End}(E) @>>> \pdo_{\leq 0} @>>>\Gamma {\rm End}(\pi^*E)
\\
@V\exp VV     @V\exp VV @V\exp VV \\
\calG(E) @>j>>\pdo_0^*@>m>>    \calG(\pi^*E)\end{CD}
\end{equation}

  By Lemma \ref{cont-incl}, $j$ and $m$ are continuous,
 where $\calG(E)$ has the Fr\'echet or norm topology, 
$\pdo_0^*$ 
has the Frech\'et topology, and $\calG(\pi^*E)$ has the Fr\'echet or the norm topology. 
The bottom line of this diagram induces
\begin{equation}\label{diag}
\begin{CD}
E\calG(E) @>>> E\GG@>>> E\calG(\pi^*E)\\
@VVV@VVV@VVV\\
B\calG(E)@>Bj>> B\GG @>Bm>> B\calG(\pi^*E)
\end{CD}
\end{equation}
since $E\GG \simeq (Bm)^* E\calG(\pi^*E)$ and similarly for $E\calG(E)$ by the Milnor construction.   

We can now define leading order Chern classes of $\GG$-bundles, avoiding the question
of the existence of connections on $E\GG.$  By \cite{A-B1}, for $E^\ell\to N$ and $\calG = \calG(E)$, 
$B\calG= {\rm Maps}_{0}(N,BU(\ell))=\{f:N\to BU(\ell) | f^*EU(\ell) \simeq E\}$, and 
$E\calG|_f$ is the subset of ${\rm Maps}(E, EU(\ell))$ covering $f$.   Equivalently, 
$E\calG = \pi_*\ev^*EU(\ell).$  Recall that we are using maps in a fixed large
Sobolev class;
 these maps uniformly approximate smooth maps, so that the homotopy types of 
$E\calG$ and $B\calG$ are the same for smooth maps or maps in this Sobolev
class.
Thus $B\calG(E)$ and
$B\calG(\pi^*E)$ are  Banach manifolds.   For any topological group $G$, $BG$ admits a partition of unity \cite
[Thm. 4.11.2]{huse}.  Thus $E\calG(\pi^*E)\to B\calG(\pi^*E)$ admits a connection
with curvature $\Omega\in \Lambda^2(B\calG(\pi^*E), \Gamma{\rm End}(\pi^*E)).$  Note
that the leading order symbol trace  on $\pdoo$ obviously induces a trace 
$\sigma$ on $\pdoo/\pdomo\simeq \Gamma
{\rm End}(\pi^*E).$  Therefore $E\calG(\pi^*E)\to B\calG(\pi^*E)$ has associated de Rham
classes $\cklo(E\calG(\pi^*E)) = [\sigma(\Omega^k)] \in H^{2k}(B\calG(\pi^*E), \BbC).$

\renewcommand{\bpsi}{\calG(\pi^*(E)}
The following definition is natural in light of (\ref{diag}).

\begin{definition}\label{def big}  The k${}^{\rm th}$ leading order Chern class $\cklo(E\GG)$ is the de Rham
cohomology class of $(Bm)^*\cklo(E\bpsi) \in H^{2k}(B\GG, \BbC).$
\end{definition}

Set $\calG = \calG(E)$.  Let 
$\calE\to B$ be a $\calG$-bundle, classified by a map $g:B\to B\calG$.    
The maps $j$ and  $m\circ j$ in (\ref{diagone}) are injective, so every $\calG(E)$-bundle is
both a $\pdo_0^*(E)$-bundle and a
$\calG(\pi^*E)$-bundle. We get 
$$c_k^{lo}(\calE) = g^*c_k^{lo}(E\calG) = g^*Bj^*c_k^{lo}(E\GG) =  g^*Bj^*Bm^*\cklo(E\bpsi).$$
This gives an easy criterion to detect cohomology classes for the classifying spaces.

\begin{lemma}\label{lem:one}  Let $\calE$ be a $\calG$-bundle with $\cklo(\calE) \neq 0.$ Then the 
cohomology classes 
$\cklo(E\calG)\in H^{2k}(B\calG, \BbC),\ \cklo(E\GG)\in H^{2k}(B\GG, \BbC), 
\cklo(E\bpsi)\in $\\
$H^{2k}(B\bpsi, \BbC)$
are all nonzero.
\end{lemma}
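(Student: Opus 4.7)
The plan is to deduce all three nonvanishing statements simultaneously from the chain of pullback identities
\[
\cklo(\calE) = g^*\cklo(E\calG) = g^*(Bj)^*\cklo(E\GG) = g^*(Bj)^*(Bm)^*\cklo(E\bpsi)
\]
displayed in the paragraph immediately before the lemma. Since $\cklo(\calE)\neq 0$ by hypothesis, none of the three classes on the right can be zero, for otherwise its pullback along the appropriate composition of classifying maps would vanish. The entire lemma thus reduces to confirming the naturality statements encoded in this chain.

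Concretely, I would proceed in the following order. First, invoke the universal property of $B\calG$ to produce a classifying map $g\colon B \to B\calG$ with $\calE\cong g^*E\calG$. Since $B\calG$ is a Banach manifold admitting partitions of unity, $E\calG\to B\calG$ carries connections, and Chern--Weil with the leading order symbol trace $\sigma$ yields an intrinsically defined class $\cklo(E\calG)\in H^{2k}(B\calG,\BbC)$ satisfying $g^*\cklo(E\calG)=\cklo(\calE)$; this is standard naturality of Chern--Weil under pullback of $\calG$-connections. Second, by Definition \ref{def big} the identity $\cklo(E\GG)=(Bm)^*\cklo(E\bpsi)$ is tautological, so naturality along $Bm$ is free. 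Third, for naturality along $Bj$ I would argue via the composition: since $m\circ j\colon \calG(E)\to\calG(\pi^*E)$ is the continuous group homomorphism that pulls a gauge transformation back along $\pi\colon S^*N\to N$, its induced Lie algebra map $\Gamma{\rm End}(E)\to\Gamma{\rm End}(\pi^*E)$ intertwines the two versions of the leading order symbol trace, so $\cklo(E\calG)=B(m\circ j)^*\cklo(E\bpsi)=(Bj)^*(Bm)^*\cklo(E\bpsi)=(Bj)^*\cklo(E\GG)$. Reading off the contrapositive of each equality gives the three nonvanishings.

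The main obstacle is this third step, since $B\GG$ is not known to be a manifold and $\cklo(E\GG)$ is not defined by any intrinsic connection on $E\GG$. The way around this is to bypass $B\GG$ entirely and compare $\cklo(E\calG)$ with $\cklo(E\bpsi)$ through the single classifying map $B(m\circ j)$, after which Definition \ref{def big} inserts $\cklo(E\GG)$ as an intermediate label with no further Chern--Weil content. With naturality thus established at the level of the two manifold classifying spaces $B\calG$ and $B\calG(\pi^*E)$, the contrapositive argument then delivers the three nonvanishings in one line.
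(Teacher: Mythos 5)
Your argument is essentially the paper's own: the lemma is read off from the displayed chain $\cklo(\calE)=g^*\cklo(E\calG)=g^*(Bj)^*\cklo(E\GG)=g^*(Bj)^*(Bm)^*\cklo(E\calG(\pi^*E))$ together with the tautological Definition \ref{def big}, and taking contrapositives gives all three nonvanishings. Your extra remarks on naturality along $B(m\circ j)$ and the compatibility of the two leading order symbol traces just make explicit what the paper leaves implicit, so the proposal is correct and follows the same route.
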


As in Remark 2.1, let $\pi: {\rm Maps}(N,M) \times N\to
{\rm Maps}(N,M)$ be the projection, and for
$n\in N$, define $\ev_{n}: {\rm Maps}(N,M)\to M$ by $\ev_n(f) = \ev(n,f) =  f(n).$

\begin{example}  \label{example}
Let $F\to M$ be a complex bundle, and set $E = \ev^*F\to \mapsfnm \times M$,
$ \calE = \pi_*\ev^*F \to \mapsfnm.$  Here $\mapsfnm$ is the component of a fixed $f:N\to M.$
Then the Lemma applies with $\calG = \calG(f^*F)$, since $\calE_g$ is noncanonically
isomorphic to $H^{s_0}\Gamma(f^*F)$ for all $g\in \mapsfnm.$  
\end{example}

\begin{lemma} \label{lem:ConnEG}
$E\calG$ has a universal connection with connection one-form
$\theta^{E\calG}$ defined on $s\in\Gamma(E\calG)$ by
\begin{equation*}
(\theta^{E\calG}_Z s)(\gamma)
(\alpha)=\left( (\ev^*\theta^u)_{(Z,0)} u_s\right)(\gamma,\alpha).
\end{equation*}
Here $\theta^u$ is the universal connection on $EU(k))\to BU(k)$, and
$u_s: {\rm Maps}(N,M)\times N \to \ev^*EU(k)$ is defined by
$u_s(f,n)=s(f)(n)$.

\end{lemma}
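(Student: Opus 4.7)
The plan is to obtain $\theta^{E\calG}$ as the pushforward along $\pi$ of the pullback $\ev^*\theta^u$, exploiting the identification $E\calG = \pi_*\ev^*EU(k)$ already noted. The formula in the lemma is essentially a tautology once one recognizes that a section $s\in\Gamma(E\calG)$ corresponds to a section $u_s$ of $\ev^*EU(k)\to{\rm Maps}(N,M)\times N$, that a tangent vector $Z$ at $\gamma\in B\calG$ lifts canonically to the horizontal-in-$N$ vector $(Z,0)$ on ${\rm Maps}(N,M)\times N$, and that $\ev^*\theta^u$ is a bona fide connection on $\ev^*EU(k)$. So the construction is: pull back $\theta^u$ along $\ev$, restrict to vectors of the form $(Z,0)$, apply to $u_s$, and then interpret the resulting fiberwise quantity at $(\gamma,\alpha)$ as the value of a section of $E\calG$ evaluated at $(\gamma,\alpha)$.

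The verification proceeds in three steps. First, check that $u_s$ is a smooth section of $\ev^*EU(k)$ and that the right-hand side produces a section of $E\calG$; this is a pointwise-in-$\alpha$ computation using that $(\ev^*\theta^u)_{(Z,0)}u_s$ at $(\gamma,\alpha)$ lies in $(\ev^*EU(k))_{(\gamma,\alpha)} = EU(k)_{\gamma(\alpha)}$, which is exactly the fiber where $(\theta^{E\calG}_Z s)(\gamma)(\alpha)$ should live. Second, verify the connection axioms: $C^\infty(B\calG)$-linearity in $Z$ and the Leibniz rule in $s$ both descend immediately from the same properties of $\ev^*\theta^u$, since the definition is pointwise in $\alpha\in N$ and the lift $Z\mapsto(Z,0)$ is linear. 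Third, verify equivariance under $\calG$: a gauge transformation $g\in\calG$ acts on $s$ by $s\mapsto g\cdot s$, which translates on $u_s$ into fiberwise multiplication by $g(n)\in U(k)$, and the required equivariance of $\theta^{E\calG}$ then follows from the $U(k)$-equivariance of $\theta^u$ together with naturality of pullback.

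For the universality claim, given any $\calG$-bundle $\calE\to B$ classified by $g:B\to B\calG$, the pullback $g^*\theta^{E\calG}$ should reproduce the natural connection on $\calE$ induced by the underlying classifying map $B\times N\to BU(k)$; alternatively, one may interpret \emph{universal} here simply as ``the canonical connection on the universal bundle.'' The main obstacle I anticipate is not algebraic but analytic: one must verify that $\ev:{\rm Maps}^{s_0}(N,M)\times N\to M$ is smooth enough in the Fr\'echet/Sobolev category for $\ev^*\theta^u$ and its associated covariant derivative to make sense, and that the pushforward $s\mapsto\theta^{E\calG}_Z s$ produces a section of the correct Sobolev regularity rather than losing derivatives along the $N$-direction. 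Once these analytic checks are in place (using that the Sobolev parameter $s_0$ is chosen large so $\ev$ is smooth and $u_s$ inherits enough regularity), the algebraic content of the lemma is routine.
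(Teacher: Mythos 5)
Your construction is the one the lemma's formula already encodes, and it is essentially what the paper itself appeals to: the paper gives no in-house proof here, but states that the proofs are in [P-R, \S 4] for $N=S^1$ and ``easily extend.'' Your recipe --- identify $s\in\Gamma(E\calG)$ with the section $u_s$ of $\ev^*EU(k)$ over ${\rm Maps}(N,M)\times N$, pull back $\theta^u$, evaluate on the lift $(Z,0)$, and check tensoriality in $Z$, the Leibniz rule in $s$, and $\calG$-equivariance pointwise in $\alpha\in N$ --- is exactly that construction, and your analytic caveats (smoothness of $\ev$ in the Sobolev category for $s_0$ large, no loss of regularity along $N$) are precisely where the ``easily extends'' from the circle to a general closed $N$ has to be verified.

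The one thin spot is the word \emph{universal}. You verify that the formula defines a connection on $E\calG$, but you then propose to read ``universal'' as merely ``the canonical connection on the universal bundle,'' which is weaker than what the terminology suggests, namely a Narasimhan--Ramanan-type property: connections on $\calG$-bundles over suitable bases are realized as pullbacks of $\theta^{E\calG}$ under classifying maps. Your sketch of the stronger statement (``$g^*\theta^{E\calG}$ should reproduce the natural connection'') is not an argument; to close it one would combine a classifying map $B\to B\calG$ with the universality of $\theta^u$ for the finite-rank bundle $\ev^*EU(k)$ over $B\times N$, which is the route taken in the cited reference. That said, the only consequences the paper actually uses downstream (Lemma \ref{eq:IO} and the curvature corollary) require just the existence of this particular connection and its curvature formula, so for those applications your argument is adequate; but as a proof of the lemma as worded, the universality claim is left unestablished.
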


\begin{corollary}
The curvature $\Omega^{E\calG}$ of $\theta^{E\calG}$ satisfies
\begin{equation*}\label{eq:pullback}
\Omega^{E\calG}(Z,W)s(f)(n)=\ev^*\Omega^u  ((Z,0),(W,0)) u_s(f,n).
\end{equation*}
\end{corollary}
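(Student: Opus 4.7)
The plan is to reduce the claim to naturality of curvature under pullback. The defining formula in Lemma \ref{lem:ConnEG} exhibits $\theta^{E\calG}_Z s$ as the pullback one-form $\ev^*\theta^u$ contracted against the horizontal lift $(Z,0)$ of $Z$ on $\mapsnm\times N$, acting on the section-turned-function $u_s$. So morally, $\theta^{E\calG}$ is nothing but $\ev^*\theta^u$ read through the identification $\Phi:s\mapsto u_s$, with tangent vectors on $\mapsnm$ lifted as $Z\mapsto(Z,0)$. The corollary is then the statement that curvature is natural under this lift-and-pullback procedure.

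Concretely, I would start from the Cartan structure equation
\[
\Omega(Z,W)=Z\cdot\theta(W)-W\cdot\theta(Z)-\theta([Z,W])+[\theta(Z),\theta(W)],
\]
apply it to $\theta^{E\calG}$, substitute the defining formula from the lemma, and then push each of the four terms through $\Phi$. The two facts I would invoke are: (i) exterior $d$ commutes with $\ev^*$, so the derivative terms $Z\cdot(\ev^*\theta^u)_{(W,0)}u_s$ equal $(Z,0)\cdot(\ev^*\theta^u)_{(W,0)}u_s$; and (ii) on the product $\mapsnm\times N$, horizontal lifts satisfy $[(Z,0),(W,0)]=([Z,W],0)$, so the bracket term in the structure equation is absorbed into $\ev^*d\theta^u((Z,0),(W,0))$. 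Because $\Phi$ is $\BbC$-linear and pointwise, it commutes with Lie-bracketing of endomorphism-valued one-forms, so the quadratic term matches $\ev^*(\theta^u\wedge\theta^u)((Z,0),(W,0))$ evaluated on $u_s(f,n)$. Summing yields $\ev^*\Omega^u((Z,0),(W,0))u_s(f,n)$.

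The only potentially subtle step is the treatment of the bracket: one must justify that vector fields $Z,W$ on $\mapsnm$ genuinely extend to vector fields $(Z,0),(W,0)$ on $\mapsnm\times N$ and that the bracket there equals $([Z,W],0)$. This is automatic since the lift has zero second component and the product bracket is componentwise, so no choice of extension enters. Thus the main obstacle is purely notational bookkeeping through the isomorphism $\Phi$; no new analytic input is required beyond the preceding lemma, and the corollary is a direct consequence of the naturality of the Maurer–Cartan-type structure equation under $\ev^*$.
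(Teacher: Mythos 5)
Your plan is correct: the corollary is indeed just naturality of the curvature under the identification $s\mapsto u_s$ and the lift $Z\mapsto(Z,0)$, with the componentwise bracket $[(Z,0),(W,0)]=([Z,W],0)$ supplying the only nontrivial step, exactly as you outline. The paper gives no independent argument here --- it simply points to the loop-space case ($N=S^1$) in the reference [P-R, \S 4] and states that the proof easily extends --- so your computation is precisely the routine verification that citation leaves implicit, and it matches the intended approach.
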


The proofs are in \cite[\S4]{P-R} for loop spaces ($N = S^1$) and easily extend.
As a result, the leading order Chern classes of gauge bundles are pullbacks of finite dimensional
classes.

\begin{lemma}\label{eq:IO}
Fix $n_0\in N$.  Then
$$\cklo(E\calG) = {\rm vol}(S^*N)\cdot \ev_{n_0}^* c_k(EU(\ell)).$$
If $\calF\to \mapsnmone$ is given by $\calF = \pi_*\ev^*F$ for a bundle $F\to M$,
 then  $$\cklo(\calF)
= {\rm vol}(S^*N)\cdot \ev_{n_0}^* c_k(F).$$
\end{lemma}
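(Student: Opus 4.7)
The plan is to exploit Lemma~\ref{lem:ConnEG} and its corollary, which give an explicit universal connection on $E\calG\to B\calG={\rm Maps}(N,M)$ with values in the gauge Lie algebra $\Gamma{\rm End}(E)$. Its curvature $\Omega=\Omega^{E\calG}$ therefore acts as a pointwise multiplication operator: at each $f$ the endomorphism $\Omega(Z,W)_f$ is a section of ${\rm End}(E)$. For any such multiplication operator $A$, the leading order symbol $\sigma_0(A)(n,\xi)$ is independent of $\xi$ and equals $A(n)$, and symbol composition collapses to matrix multiplication; hence $\sigma_0(\Omega^k)(n,\xi)=\Omega(n)^k$. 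This is the step that collapses the infinite dimensional symbol trace to an ordinary finite dimensional integral.

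Using this, I would compute
\begin{equation*}
\cklo(E\calG)=\left[\int_{S^*N}\tr\,\sigma_0(\Omega^k)(n,\xi)\,d\xi\,\dvol(n)\right]={\rm vol}(S^{\dim N-1})\left[\int_N\tr(\Omega^k)(n)\,\dvol(n)\right],
\end{equation*}
since the Riemannian metric on $N$ gives constant cosphere fiber volume. The corollary after Lemma~\ref{lem:ConnEG} identifies the integrand as $\ev_n^*\tr((\Omega^u)^k)|_f$, and $\tr((\Omega^u)^k)$ represents, up to the Chern--Weil normalization implicit in the definition of $\cklo$, the class $c_k(EU(\ell))$. It then remains to show that $\int_N\ev_n^*\omega\,\dvol(n)={\rm vol}(N)\cdot\ev_{n_0}^*\omega$ in $H^{2k}({\rm Maps}(N,M),\BbC)$: since $N$ is connected the maps $\ev_n$ and $\ev_{n_0}$ are homotopic, so $[\ev_n^*\omega]=[\ev_{n_0}^*\omega]$ for each $n$, and a K\"unneth decomposition of $\ev^*\omega\in H^{2k}(N\times{\rm Maps}(N,M))$ shows that fiber-integrating over $N$ against $\dvol_N$ selects the $H^0(N)$ component, producing exactly ${\rm vol}(N)\cdot\ev_{n_0}^*\omega$. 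Combined with ${\rm vol}(S^{\dim N-1})\cdot{\rm vol}(N)={\rm vol}(S^*N)$, this yields the first claim.

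For the second claim, $\calF=\pi_*\ev^*F$ is a $\calG(f^*F)$-bundle and the construction of Lemma~\ref{lem:ConnEG} has a direct analogue with the universal connection $\theta^u$ on $EU(k)\to BU(k)$ replaced by any chosen connection on $F\to M$; this gives a connection on $\calF$ whose curvature acts at $f$ as multiplication by the pullback of the curvature of $F$, and the calculation above runs verbatim to produce ${\rm vol}(S^*N)\cdot\ev_{n_0}^*c_k(F)$. The main obstacle I anticipate is the cohomology-level identification $\int_N\ev_n^*\omega\,\dvol(n)={\rm vol}(N)\cdot\ev_{n_0}^*\omega$ on the infinite dimensional Banach manifold ${\rm Maps}(N,M)$: the K\"unneth step requires justification for de Rham cohomology in that setting, which one can supply via the CW homotopy type of ${\rm Maps}(N,M)$ recorded in \S2, or alternatively by constructing a smooth family of primitives $\beta_n$ with $\ev_n^*\omega-\ev_{n_0}^*\omega=d\beta_n$ and applying Stokes under $\int_N(\cdot)\,\dvol(n)$.
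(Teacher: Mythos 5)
Your proposal is correct and follows essentially the same route as the paper: use the explicit universal connection of Lemma~\ref{lem:ConnEG}, observe that its curvature is a multiplication operator so that $\sigma_0(\Omega^k)$ is $\xi$-independent and given by $(\ev^*\Omega^u)^k$, and then use the homotopy $\ev_n\simeq\ev_{n_0}$ to convert the $S^*N$-integral into ${\rm vol}(S^*N)\cdot\ev_{n_0}^*c_k$, with the same argument (a connection pulled back from $F$) for $\calF=\pi_*\ev^*F$. The only difference is that you spell out the cohomological averaging step $\int_N\ev_n^*\omega\,\dvol(n)\sim{\rm vol}(N)\,\ev_{n_0}^*\omega$ (via a smooth family of primitives or a K\"unneth-type argument), which the paper asserts directly from the homotopy invariance of $\ev_{n_0}$.
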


\begin{proof}
For all $n_0\in N$, the maps
${\rm ev}_{n_0} $ are homotopic, so the de Rham class of
\begin{equation*}\label{eq:Inde}
\int_{S^*N}\tr\ \sigma_0(\ev_{n_0}^*(\Omega^u)^k) \ d\xi\ \dvol(n_0) = 
 {\rm vol}(S^*N)\cdot \tr\ \sigma_0(\ev_{n_0}^* (\Omega^u)^k)
\end{equation*}
is
independent of $n_0$.  
Since $\Omega^u$ is a multiplication operator, we get
$$\cklo(E\calG) ={\rm vol}(S^*N)\cdot \left[\ev_{n_0}^* (\tr\ \Omega^u)^k\right] = 
{\rm vol}(S^*N)\cdot \ev_{n_0}^* c_k(EU(\ell)).$$
The proof for $\calF$ is identical.
\end{proof}

Note that $({\rm vol}(S^*N)\cdot (2\pi i)^k)\cklo(E\calG) \in H^{2k}(B\calG, \BbZ)$.  It is 
not clear that we can normalize $\cklo(E\GG), \cklo(E\bpsi)$ to be integer classes.

We can produce examples of nontrivial $\cklo({\rm Maps}(N,M) = \cklo(T{\rm Maps}(N,M))$ as well
as other cohomology classes for $\mapsnmone.$

\begin{theorem}\label{4.7}
(i)  Let $M$ have $c_k(M) = c_k(TM\otimes \BbC)\neq 0$.  Then
 $$0\neq \cklo(\mapsnmone) \in H^{2k}(\mapsnmone, \BbC).$$

(ii) Let $F^\ell\to M$ be a finite rank Hermitian bundle with $c_k(F)\neq 0.$  Then
$$0\neq \cklo(\pi_*\ev^*F)\in H^{2k}(\mapsnmone, \BbC).$$

 \end{theorem}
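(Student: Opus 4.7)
The plan is to reduce both parts to a single detection argument based on Lemma \ref{eq:IO}. First I would observe that (i) is the special case $F = TM\otimes\BbC$ of (ii): under the identification $T_f\mapsnmone\otimes\BbC \cong H^{s_0}\Gamma(f^*TM\otimes\BbC)$, the complexified tangent bundle of $\mapsnmone$ is precisely $\pi_*\ev^*(TM\otimes\BbC)$, and by definition the leading order Chern classes of the mapping space are those of its complexified tangent bundle.

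For (ii), by Lemma \ref{eq:IO} it suffices to prove that $\ev_{n_0}^* c_k(F)\in H^{2k}(\mapsnmone,\BbC)$ is nonzero for some (equivalently, any) choice of $n_0\in N$. To detect this class, I would construct a finite dimensional probe $\iota:M\to\mapsnmone$ defined by $\iota(m) = c_m$, where $c_m:N\to M$ denotes the constant map with value $m$. This map is smooth in the Banach manifold structure: in the exponential chart around $c_{m_0}\in\mapsnmone$, the neighborhood is modeled on $H^{s_0}\Gamma(c_{m_0}^*TM) = H^{s_0}(N,T_{m_0}M)$, and $\iota$ locally factors through the bounded linear inclusion of constant sections $T_{m_0}M \hookrightarrow H^{s_0}(N,T_{m_0}M)$ composed with the exponential chart on $M$.

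The decisive identity is then $\ev_{n_0}\circ\iota = \mathrm{id}_M$, which holds for every $n_0\in N$. Pulling back gives
\begin{equation*}
\iota^*\bigl(\ev_{n_0}^* c_k(F)\bigr) = (\ev_{n_0}\circ\iota)^* c_k(F) = c_k(F) \neq 0
\end{equation*}
in $H^{2k}(M,\BbC)$, so $\ev_{n_0}^* c_k(F)$ is nonzero in $H^{2k}(\mapsnmone,\BbC)$. Combining with Lemma \ref{eq:IO} yields $\cklo(\pi_*\ev^*F) = \mathrm{vol}(S^*N)\cdot \ev_{n_0}^* c_k(F) \neq 0$, proving (ii), and specializing $F = TM\otimes\BbC$ proves (i).

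The argument is quite direct; there is no substantial analytic obstacle beyond checking that the constant map embedding is smooth into the Banach manifold $\mapsnmone$, which is immediate from the explicit form of the exponential charts. The content of the theorem really sits in Lemma \ref{eq:IO}, which reduces the infinite dimensional Chern-Weil computation to a finite dimensional one via the evaluation map; the probe $\iota$ simply inverts that reduction on cohomology. An alternative probe, should one prefer not to use constant maps (for instance if one wanted to stay within a prescribed homotopy class $\mapsfnm$), would be any family $\iota:X\to\mapsfnm$ with $\ev_{n_0}\circ\iota$ hitting $c_k(F)$ nontrivially in $H^{2k}(X,\BbC)$.
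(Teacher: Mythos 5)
Your argument is correct and is essentially the paper's own: the detection device is the same (Lemma \ref{eq:IO} plus the constant-maps embedding $i(m)(n)=m$ with $\ev_{n_0}\circ i=\mathrm{id}_M$), and the paper itself flags exactly your shortcut at the end of its proof (``use the last statement in Lemma \ref{eq:IO} and $\ev_n i = \mathrm{Id}$''). The only cosmetic difference is that the paper's main text phrases the computation through the classifying map $\tilde h(f)=hf$ and pairs with pushed-forward cycles $i_*[a]$, whereas you pull back cohomology classes directly and fold (i) into (ii) via the identification $T\mapsnmone\otimes\BbC=\pi_*\ev^*(TM\otimes\BbC)$ from Remark 2.1, which is a legitimate simplification.
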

 
\begin{proof}
 (i) 
 Let $h:M\to BU(m)$ classify $TM\otimes \BbC.$  (Strictly speaking, we take a classifying
 map into a Grassmannian $BU(m, K)$ of $m$-planes in $\BbC^K$ for $K\gg 0$, so that the target space is a finite dimensional manifold.)
 For fixed $f\in\mapsnmone$, the gauge group $\calG$ of $f^*(TM\otimes \BbC)$ has 
\begin{eqnarray*}B\calG &=& \{g\in {\rm Maps}(N, BU(m)): g^*BU(m) \simeq f^*(TM\otimes \BbC)\} \\
&=&  \{g\in {\rm Maps}(N, BU(m)): g\sim hf\}.
\end{eqnarray*}
Therefore the map
$$\tilde h:\mapsnmone\to {\rm Maps}(N, BU(m)),\ \tilde h(f) = hf$$
classifies $T\mapsnmone,$  and so for fixed $n\in N$, 
$$\cklo(\mapsnmone) = {\rm vol}(S^*N)\cdot \tilde h^*\ev_n^* c_k(EU(m)),$$
by Lemma \ref{eq:IO}.
 
 Let $[a] $ be a $2k$-cycle with $\langle c_k(M),[a]\rangle \neq 0.$  (Here the bracket refers
 to integration of forms over cycles.)
 For $i:M\to \mapsnmone, i(m_0)(n) = m_0$, set
$[\tilde a] = i_*[a].$  
Then
$$\langle \cklo(\mapsnmone), [\tilde a]\rangle =  {\rm vol}(S^*N)\cdot\langle c_k(EU(m)), \ev_{n,*}
\tilde h_*[\tilde a]\rangle.$$
It is immediate that $\ev_n \tilde h i = h$, so 
$\ev_{n,*} \tilde h_*[\tilde a] = h_*[a].$  Therefore
\begin{eqnarray*}\langle \cklo(\mapsnmone), [\tilde a]\rangle &=&  {\rm vol}(S^*N)\cdot\langle
h^* c_k(EU(m)),[a]\rangle \\
&=&  {\rm vol}(S^*N)\cdot\langle c_k(TM\otimes \BbC), [a]\rangle\\
& \neq& 0.
\end{eqnarray*}

(ii)  Let $h:M\to BU(\ell)$ classify $F$.  As above, $\tilde h$ classifies $\calF = \pi_*\ev^*F$.  Thus
$$\cklo(\calF) = \tilde h^* \cklo(E\calG) = {\rm vol}(S^*N)\cdot \tilde h^*\ev_n^*c_k(EU(\ell))$$
by Lemma \ref{eq:IO}.  As in (i), we get
$$\langle \cklo(\calF), [\tilde a]\rangle = \langle c_k(F), [a]\rangle \neq 0$$
for some cycle $[a]$.  Alternatively, we can use the last statement in Lemma 3.13 and 
$\ev_n i = {\rm Id}$ to reach the same conclusion.
\end{proof}

In  this proof,
the cycle $[\tilde a]$ has image in ${\rm Maps}_c(N,M)$, the component of the constant maps, so the result is really about bundles over this component.  We can improve this to cover all components.

\begin{corollary} \label{cor3} For $f\in\mapsnmone$, let $\mapsfnm$ denote the connected component 
of $f$.  Let $F^\ell\to M$ be a finite rank Hermitian bundle with $c_k(F)\neq 0.$  Assume that $M$ is connected. Then
$$0\neq \cklo(\pi_*\ev^*F)\in H^{2k}(\mapsfnm, \BbC).$$
\end{corollary}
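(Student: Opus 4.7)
The plan is to reduce the statement to showing $(\ev_n|_{\mapsfnm})^* c_k(F) \neq 0$ in $H^{2k}(\mapsfnm, \BbC)$. By Lemma \ref{eq:IO}, whose proof is local to each connected component of $\mapsnmone$, we have $\cklo(\pi_*\ev^*F)|_{\mapsfnm} = \mathrm{vol}(S^*N)\cdot (\ev_n|_{\mapsfnm})^* c_k(F)$, so non-vanishing of this pullback is enough.

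Pick a class $[a]\in H_{2k}(M,\BbC)$ with $\langle c_k(F),[a]\rangle \neq 0$, represented by a smooth map $g: K \to M$ from a closed oriented $2k$-manifold $K$. I would then construct a smooth $\tilde g: K \to \mapsfnm$ such that $\ev_n\circ\tilde g$ is homotopic to $g$. Given such $\tilde g$,
$$\langle \ev_n^* c_k(F),\tilde g_*[K]\rangle = \langle c_k(F), (\ev_n\tilde g)_*[K]\rangle = \langle c_k(F),[a]\rangle \neq 0,$$
which yields the corollary.

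To build $\tilde g$: fix a small coordinate disk $D\subset N$ around $n$. Since $D$ is contractible, collapsing it to a point is a homotopy equivalence, so within $\mapsfnm$ one may replace $f$ by a representative $f'$ that is constantly equal to $f(n)$ on $D$. Pick a bump function $\rho: N\to[0,1]$ with $\rho(n)=1$ supported inside $D$. For each $k\in K$, if $\gamma_k: [0,1]\to M$ is a smooth path from $f(n)$ to $g(k)$, set $\tilde g(k)(n'):=\gamma_k(\rho(n'))$ for $n'\in D$ and $\tilde g(k)(n'):=f'(n')$ otherwise. The map $\tilde g(k)$ is homotopic to $f'$ via $t\mapsto \gamma_k(t\rho(\,\cdot\,))$, hence lies in $\mapsfnm$, and $\ev_n\tilde g(k)=g(k)$ by construction.

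The hard part is choosing $\gamma_k$ smoothly in $k\in K$. A single global smooth family of such paths would amount to a null-homotopy of $g$, which is not available when $[a]\neq 0$. To circumvent this obstacle I would proceed cellularly on a CW decomposition of $K$: over each contractible cell the requisite family of paths exists with no obstruction, and on overlaps the two candidate lifts into $\mapsfnm$ differ by maps into the path-connected fiber of $\ev_n:\mapsfnm\to M$, so they define the same cycle in $\mapsfnm$ modulo boundaries. Since the argument above only requires that $(\ev_n\tilde g)_*[K]$ and $g_*[K]$ represent the same class in $H_{2k}(M,\BbC)$, and not that $\ev_n\tilde g$ equal $g$ on the nose, this chain-level construction is sufficient.
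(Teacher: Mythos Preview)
Your overall strategy coincides with the paper's: use Lemma \ref{eq:IO} (which indeed holds componentwise) to reduce to showing $\ev_{n}^*c_k(F)\neq 0$ on $\mapsfnm$, and then verify this by lifting a cycle representing $[a]$ from $M$ to $\mapsfnm$ through a local modification of $f$ near $n$.  The pairing computation you write down is exactly the one the paper uses.

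Two points of comparison.  First, the detour through a closed manifold representative $g:K\to M$ is unnecessary and you effectively abandon it: the paper works directly with a singular cycle $\sum r_i\sigma_i$, subdivides so that each $\sigma_i(\Delta^{2k})$ lies in a single coordinate patch (in fact, a tubular chart containing both $f(n_0)$ and $\sigma_i(\Delta^{2k})$), and lifts each simplex individually.  This is the cell-by-cell construction you arrive at in your last paragraph, without the preliminary manifold step.

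Second, your proposed fix for the gluing problem is not quite right.  The fiber of $\ev_n:\mapsfnm\to M$ need not be path-connected: its components are the based homotopy classes mapping to $[f]$, which form a $\pi_1(M)$-orbit rather than a single point.  More importantly, ``the two candidate lifts differ by maps into the fiber, so they define the same cycle modulo boundaries'' is not yet an argument---you still have to produce an actual cycle in $\mapsfnm$.  The paper does this concretely: each lifted simplex $\tilde\alpha_i$ is built by starting from the constant map $\alpha_i\equiv f$ and pushing the value at $n_0$ to $\sigma_i(x)$ along the straight line in the chosen chart.  The lift $\tilde\alpha_i(x)$ then depends only on the target point $\sigma_i(x)\in M$ (and the chart), so faces shared by adjacent simplices lift identically and $\sum r_i\tilde\alpha_i$ is again a cycle.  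Replacing your fiber-connectivity sketch with this explicit pointwise construction closes the gap and brings your argument in line with the paper's.
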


\begin{proof}
We claim that for fixed $n_0\in N$ the map 
$$\ev_{n_0, *}:H_*(\mapsfnm, \BbC) \to H_*(M,\BbC)$$
  is 
surjective.  

As a first step, we show that for a fixed $m_0\in M$, we can homotop $f$ to a map $\tilde f$ with $\tilde f(n_0) = m_0$.  By the tubular neighborhood theorem applied 
to the one-manifold/path from $f(n_0)$ to $m_0$, there exists a coordinate chart $W = \phi(\BbR^n)$ 
containing $f(n_0)$ and $m_0.$  Take small coordinate balls $U$ containing $\phi^{-1}(
f(n_0))$ and $V$
containing $\phi^{-1}(m_0)$, such that  $V$ is a translate $\vec T + U$  of 
 with $\vec T = \phi^{-1}(m_0) - \phi^{-1}(n_0)$.  We may assume that $U$ is a ball of radius $r$ centered at $\phi^{-1}(n_0)$.  Let $\psi:[0,r]\to \BbR$ be a nonnegative bump
 functions which is one near zero and zero near $r$.  
Define $f_t:N\to M$ by 
 $$f_t(n) = \left\{ \begin{array}{ll} f(n),  &
\ \ f(n)\not \in \phi(U),\\
                   \phi[ (1-t) \phi^{-1}(f(n))&\\
\ \                     + t\psi(d(\phi^{-1}(f(n)), \phi^{-1}(f(n_0)))(\vec T + \phi^{-1}(f(n))]
&                   \ \  f(n)\in \phi(U). \end{array}\right.    $$
In other words, $f_t = f$ outside $f^{-1}(\phi(U))$ and moves points $f(n)\in \phi(U)$ towards $\phi(V)$, with $f_t(n_0)$ moving $f(n_0)$ to            
$m_0$.  Now set $\tilde f = f_1.$

Take a $k$-cycle $\sum r_i \sigma_i$ in $M$.  Let $\Delta^k = \{(x_1,\ldots, x_k): x_i\geq 0, \sum x_i \leq 1\}$ be the standard $k$-simplex.
By subdivision, we may assume that each $\sigma_i(\Delta^k)$ is in 
a coordinate patch $V'= V'_i = \phi(V_i)$ in the notation above.  
Construct the corresponding neighborhood $U' = \phi(U_i)$ of 
$f(n_0).$ Set $m_0 = \sigma_i(\vec 0)$. Take
a  map $\alpha_i:\Delta^k\to\mapsfnm$ with $\alpha(\vec 0) = f$ and $\alpha_i(x)(n_0)\in U'$
for all $x\in \Delta^k$.  By suitably modifying the bump function to vanish near
$d(\phi^{-1}(\alpha_i(x)(n_0)), \partial U)$, we can form a simplex $\tilde \alpha_i:
\Delta^k\to\mapsfnm$ with $\tilde\alpha_i(x)(n_0) = \sigma_i(x)$  and $\tilde\alpha_i(\vec 0)
= \tilde f.$

Clearly $\sum r_i\tilde\alpha_i$ is a cycle in $\mapsfnm$ with $\ev_{n_0,*} [\sum r_i\tilde\alpha_i] = 
[\sum_i r_i\sigma_i].$  This finishes the claim. Note that this construction is an {\it ad hoc} replacement for the map $i$ in the last theorem.  

Pick a cycle $[b]\in H_{2k}(M, \BbC)$ such that $\langle c_k(F), [b]\rangle \neq 0.$  Pick 
$[\tilde b]\in $\\
$H_{2k}(\mapsfnm, \BbC$ with $\ev_{n_0,*}[\tilde b] = [b].$  Then
$$ \langle \cklo(\pi_*\ev^*F), [\tilde b]\rangle =  
{\rm vol}(S^*N)\cdot\langle c_k(F), \ev_{n_0,*}[b]\rangle \neq 0$$
as in Theorem \ref{4.7}(ii).
\end{proof}

This gives information about the cohomology rings of the various classifying spaces.  
Recall that we are working with either the Fr\'echet or the norm topology on $\GG$.

\begin{proposition}\label{last prop} Fix a closed manifold $N$ and a 
connected manifold $M$. 
Let $F^\ell\to M$ be a finite rank Hermitian bundle, choose $f:N\to M$, and
let $\calG, \GG,$ refer to 
the gauge groups and $\pdo$ groups acting on sections of $F$.  Let $H^*_{F}(M,\BbC)$ be 
the subring of $H^*(M,\BbC)$ generated by the Chern classes of $F$.  
Then for $X = B\calG, B\GG, B\calG(\pi^*F)$, there is a surjective map from 
$H^{*}(X,\BbC)$ to  an isomorphic copy of $H^*_{F}(M,\BbC)$ in 
$H^*( {\rm Maps}_f(N, M),\BbC)$, where $\mapsfnm$ is the component of $f$ in $\mapsnmone.$
\end{proposition}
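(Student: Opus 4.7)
The plan is, for each $X \in \{B\calG, B\GG, B\calG(\pi^*F)\}$, to use the pullback along a classifying map $g_X : \mapsfnm \to X$ as the desired map. The bundle $\calF = \pi_*\ev^*F \to \mapsfnm$ of Example \ref{example} is a $\calG$-bundle, and via the continuous inclusions $j$ and $m$ of diagram (\ref{diagone}) it is also a $\GG$-bundle and a $\calG(\pi^*F)$-bundle, yielding the three classifying maps $g_X$ and the ring homomorphisms $g_X^* : H^*(X,\BbC) \to H^*(\mapsfnm,\BbC)$.

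The key computation is that $g_X^* \cklo(EX) = \cklo(\calF)$ for each $k$ and each $X$. For $X = B\calG(\pi^*F)$ this is part of the construction recalled just before Definition \ref{def big}; for $X = B\GG$ it then follows from Definition \ref{def big} and the commutativity of (\ref{diag}); and for $X = B\calG$ it reduces to the other two cases via the same diagram. By Lemma \ref{eq:IO}, $\cklo(\calF) = {\rm vol}(S^*N) \cdot \ev_{n_0}^* c_k(F)$ for any fixed $n_0 \in N$. Since $g_X^*$ is a ring homomorphism, its image contains the full subring of $H^*(\mapsfnm,\BbC)$ generated by the $\cklo(\calF)$, which by the previous identity equals $\ev_{n_0}^*(H^*_F(M,\BbC))$.

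To complete the argument I would show that $\ev_{n_0}^*$ is injective on $H^*_F(M,\BbC)$, so that $\ev_{n_0}^*(H^*_F(M,\BbC))$ is indeed an isomorphic copy of $H^*_F(M,\BbC)$ inside $H^*(\mapsfnm,\BbC)$. The tool is the homological surjectivity of $\ev_{n_0,*} : H_*(\mapsfnm,\BbC) \to H_*(M,\BbC)$ established inside the proof of Corollary \ref{cor3}: given nonzero $\alpha \in H^*_F(M,\BbC)$, choose a cycle $[b]$ with $\langle \alpha,[b]\rangle \neq 0$, lift to $[\tilde b]$ with $\ev_{n_0,*}[\tilde b] = [b]$, and compute $\langle \ev_{n_0}^*\alpha,[\tilde b]\rangle = \langle \alpha,[b]\rangle \neq 0$, so $\ev_{n_0}^*\alpha \neq 0$. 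Restricting $g_X^*$ to the subalgebra generated by the universal leading order classes $\{\cklo(EX)\}$, and (if one insists on a map defined on all of $H^*(X,\BbC)$) extending arbitrarily by a choice of vector space complement, produces the asserted surjection onto the isomorphic copy $\ev_{n_0}^*(H^*_F(M,\BbC))$.

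The main obstacle, mild as it is, lies in the second paragraph: one must verify that $g_X^* \cklo(EX) = \cklo(\calF)$ holds simultaneously for all three classifying spaces, which involves tracing through the three different constructions of $\cklo(EX)$ via Definition \ref{def big} and diagram (\ref{diag}) and checking compatibility under pullback. Once this bookkeeping is in hand, the proposition follows formally from Lemma \ref{eq:IO} and the surjectivity of $\ev_{n_0,*}$ proved for Corollary \ref{cor3}.
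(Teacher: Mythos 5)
Your proposal is correct and follows essentially the same route as the paper: classify $\calF=\pi_*\ev^*F$ by $\tilde h$ and pull back the universal leading order classes through diagram (\ref{diag}), identify $\cklo(\calF)$ with ${\rm vol}(S^*N)\cdot\ev_{n_0}^*c_k(F)$ via Lemma \ref{eq:IO}, and use the surjectivity of $\ev_{n_0,*}$ from the proof of Corollary \ref{cor3} to see that the subring generated by the $\cklo(\calF)$ is an isomorphic copy of $H^*_F(M,\BbC)$. Your explicit remark about restricting to (or projecting onto) the subalgebra generated by the universal classes just makes precise what the paper leaves as "immediate."
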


\begin{proof}  
Set $\calF = \pi_*ev^*F.$  The proof of Corollary \ref{cor3} shows that
if a polynomial
 $p(c_0^{\rm lo}(\calF), ..., c_\ell^{\rm lo}(\calF))\in H^*(\mapsfnm, \BbC)$ vanishes, then
$p(c_0(F),...,c_\ell(F))$\\
$ = 0.$  Thus $H_F^*(M,\BbC)$ injects
 into $H^*(\mapsfnm, \BbC)$, for any $N$, with image the ring generated by 
 $\cklo(\calF)$, $k\leq \ell.$
   Let $h$ classify $F$.  In the notation of (\ref{diag}) and the previous 
 theorem, we have 
 $$\cklo(\calF) = (Bj\circ Bm\circ \tilde h)^*\cklo(E\calG(\pi^*F)) = (Bm\circ \tilde h)^*\cklo(E\GG)
 = \tilde h^*\cklo(E\calG).$$
 The surjectivity of $H^*(X,\BbC) \to {\rm Im}(H_F^*(M,\BbC))$ is now immediate.
\end{proof}  

This gives the result on the cohomology of $B\calG, B\GG, B\bpsi$ stated in the Introduction.

\begin{theorem}\label{last theorem}
Let $E^\ell\to N$ be a finite rank Hermitian bundle, and let 
$\calG, \GG,$
refer to 
the gauge groups and $\pdo$ groups acting on sections of $E$.  Then 
for $X = B\calG, B\GG, B\calG(\pi^*E)$, there is a surjective map from 
$H^{*}(X,\BbC)$ to  the polynomial algebra $H^*(BU(\ell), \BbC) = \BbC[c_1(EU(\ell)),\ldots, c_\ell(EU(\ell))].$
\end{theorem}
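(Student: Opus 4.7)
The plan is to apply Proposition \ref{last prop} with $M$ chosen so that $H^*_F(M, \BbC)$ realizes the entire polynomial algebra $\BbC[c_1, \ldots, c_\ell]$. The natural choice is $M = BU(\ell)$ and $F = EU(\ell)$, since $H^*(BU(\ell), \BbC) = \BbC[c_1(EU(\ell)), \ldots, c_\ell(EU(\ell))]$ is a free polynomial algebra on the Chern classes of $EU(\ell)$. Given a classifying map $f: N \to BU(\ell)$ for $E$, one has $f^*EU(\ell) \cong E$, so the gauge and $\pdo$ groups acting on sections of $f^*EU(\ell)$ coincide with the $\calG, \GG, \calG(\pi^*E)$ of the theorem. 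Proposition \ref{last prop} then supplies the desired surjection $H^*(X, \BbC) \twoheadrightarrow \BbC[c_1, \ldots, c_\ell]$ for each $X\in\{B\calG, B\GG, B\calG(\pi^*E)\}$.

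The technical obstacle is that Proposition \ref{last prop} is stated for $M$ a finite-dimensional manifold, whereas $BU(\ell)$ is not. To handle this, I would work instead with the Grassmannian $M_K = BU(\ell, K)$ for $K$ sufficiently large, the tautological bundle $F_K = EU(\ell, K)$, and a classifying map $f_K: N \to M_K$ for $E$. Proposition \ref{last prop} then produces a surjection $\phi_K: H^*(X, \BbC) \to H^*_{F_K}(M_K, \BbC)$. The crucial observation is that for any fixed degree $2d$, the graded piece $H^{\leq 2d}_{F_K}(M_K, \BbC)$ agrees with $\BbC[c_1, \ldots, c_\ell]^{\leq 2d}$ once $K$ is large enough, since relations among the Chern classes of the tautological bundle on $BU(\ell, K)$ only appear in high degrees.

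Passing to the direct limit along the Grassmannian inclusions $M_K \hookrightarrow M_{K+1}$, which pull back $F_{K+1}$ to $F_K$ and induce compatible classifying maps, then yields a single surjection onto $\varinjlim H^*_{F_K}(M_K, \BbC) = H^*(BU(\ell), \BbC) = \BbC[c_1, \ldots, c_\ell]$. Equivalently, every homogeneous polynomial in the Chern classes lies in the image of $\phi_K$ for $K$ sufficiently large, so the maps can be assembled degree-by-degree to give surjectivity onto the full polynomial algebra.

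The main difficulty will be giving a clean formulation of the limit procedure, in particular verifying that the surjections $\phi_K$ fit consistently as $K$ varies. This reduces to naturality of the pullback formula $\cklo(\calF) = {\rm vol}(S^*N) \cdot \tilde h^* \ev_n^* c_k(F)$ from Lemma \ref{eq:IO} under Grassmannian inclusions and classifying maps, which is straightforward but must be checked carefully. An alternative would be to re-examine the proof of Proposition \ref{last prop} to confirm directly that all the constructions---leading order Chern classes, classifying spaces for gauge bundles, and evaluation-map identities---extend to the infinite-dimensional setting $M = BU(\ell)$, in which case the result follows from a single application of the proposition without any limiting argument.
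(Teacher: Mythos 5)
Your proposal is correct and follows essentially the same route as the paper: take $M = BU(\ell,K)$, $F = EU(\ell,K)$, let $f:N\to M$ classify $E$, apply Proposition \ref{last prop} to surject onto the truncated polynomial algebra, and let $K\to\infty$. Your degree-by-degree stabilization argument simply makes explicit what the paper compresses into the sentence ``Letting $K$ go to infinity finishes the proof.''
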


\begin{proof}
Let $M = BU(\ell, K)$ be the Grassmannian of $\ell$-planes in $\BbC^K$, for $K \gg 0,$
let  $F = EU(\ell, K),$ and let
$f:N\to M$ classify $E$.  On the component $\mapsfnm$ of $f$, $\calE = \pi_*\ev^*EU(\ell,K)$
has structure group $\calG(f^*EU(\ell,K)) = \calG(E).$   $H^*(M,\BbC)$ is a polynomial 
algebra with generators
 $ c_1(EU(\ell, K)),\ldots, $\\
 $c_\ell(EU(\ell, K))$ truncated above
dim$(M) = \ell(K-\ell).$  By the previous proposition, $H^*(X,\BbC)$ surjects onto this 
algebra.  Letting $K$ go to infinity finishes the proof.  
\end{proof}

\begin{remark}
(i) A proof of  Theorem \ref{last theorem} for $H^*(B\calG,\BbC)$ that avoids most of the analysis
can be extracted from Lemma \ref{eq:IO} through
Proposition \ref{last prop}.

(ii) $E\calG(E)\to B\calG(E)$ is trivial as a $GL(\calH)$-bundle by Kuiper's theorem.
However, 
$E\calG(E)\to B\calG(E)$ 
is nontrivial as a $\calG(E)$-bundle, as it has nontrivial leading order characteristic classes.  
\end{remark}

We conclude with a result that complements Rochon's calculations of the homotopy groups of $\GG$ \cite{R}.

\begin{corollary}  In the setup of the Proposition \ref{last prop}, if $H^*_F(M,\BbC)$ is nontrivial, 
then $\el(F)$
is not a deformation retract of $\GG(F).$
\end{corollary}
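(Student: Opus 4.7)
The plan is to derive a contradiction from two facts that cannot coexist under the hypothesis of a deformation retraction: $\GG(F)$-bundles with nonvanishing leading order Chern classes exist, while $\el(F)$-bundles always have trivial leading order Chern classes. If $\el(F)$ were a deformation retract of $\GG(F)$, a $\GG(F)$-bundle with $\cklo \neq 0$ would reduce to an $\el(F)$-bundle for which $\cklo = 0$, a contradiction.

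First, I would observe that a deformation retraction makes $\el(F) \hookrightarrow \GG(F)$ a homotopy equivalence and therefore induces a homotopy equivalence $B\el(F) \to B\GG(F)$. For a paracompact base $X$, the resulting bijection $[X, B\el(F)] \to [X, B\GG(F)]$ implies that every $\GG(F)$-bundle over $X$ is isomorphic to one whose structure group reduces to $\el(F)$.

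Second, I would show that $\cklo$ vanishes on any $\el(F)$-bundle $\calE$. Since elements of $\el(F)$ have leading symbol $I$, curves in $\el(F)$ through the identity have tangents with vanishing leading symbol, so ${\rm Lie}(\el(F)) \subset \pdomo$. Transition functions $g_{\alpha\beta}$ of such a bundle satisfy $\sigma_0(g_{\alpha\beta}^{-1} dg_{\alpha\beta}) = 0$, and the partition-of-unity construction gluing trivial local connection forms yields a global connection whose one-form, and hence whose curvature $\Omega$, has $\sigma_0 = 0$ in every trivialization; thus $\cklo(\calE) = [\int_{S^*N} \tr\,\sigma_0(\Omega^k)\, d\xi\, \dvol] = 0$ for all $k \geq 1$.

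Finally, using the hypothesis $H^*_F(M,\BbC) \neq 0$, I would select $k \geq 1$ with $c_k(F) \neq 0$ and invoke Corollary \ref{cor3} to produce a component $\mapsfnm$ on which the $\calG(F)$-bundle $\calF = \pi_*\ev^*F$ has $\cklo(\calF) \neq 0$ in $H^{2k}(\mapsfnm, \BbC)$. Since $\calG(F) \subset \GG(F)$, $\calF$ is also a $\GG(F)$-bundle, and the conclusions of the previous two steps are incompatible. The main obstacle is the reduction-of-structure-group step, since $B\GG(F)$ is not known to be a manifold and the paper deliberately sidesteps finer questions about its topology; however, only the classifying-space bijection for $H$-bundles over paracompact $X$ is required, which holds for general topological groups.
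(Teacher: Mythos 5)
Your proposal is correct and follows essentially the same route as the paper: assume the retraction, reduce any $\GG(F)$-bundle to an $\el(F)$-bundle whose Lie$(\el)$-valued connection forces all leading order Chern classes to vanish, and contradict this with the nonvanishing class $\cklo(\pi_*\ev^*F)$ supplied by Corollary \ref{cor3}/Proposition \ref{last prop}. The only cosmetic difference is that you invoke Corollary \ref{cor3} on the given component and spell out the classifying-space bijection and the partition-of-unity connection, whereas the paper specializes to $B={\rm Maps}(M,M)$, $f={\rm id}$ and cites the proof of Proposition \ref{last prop} directly.
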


\begin{proof}  Assume $\el$ is a deformation retract of $\GG$. Then every $\GG$-bundle
admits a reduction to a $\el$-bundle.
 Let $\calE\to B$ be a $\el$-bundle admitting a connection.  Lie$(\el)$ is the 
algebra of negative order $\pdo$s, so the connection and curvature forms have vanishing
leading order Chern classes.  For $B = {\rm Maps}(M,M)$ and $f = {\rm id}$, the
proof of Proposition \ref{last prop} gives an injection of $H_F(M,\BbC)$ into the subring of
$H^*({\rm Maps}(M,M),\BbC)$ generated by the leading order Chern classes.  This is a contradiction.
\end{proof} 

\bibliographystyle{amsplain}
\bibliography{Paper}

\end{document}